\documentclass[11pt, a4paper]{article}
\usepackage{amsmath, amsthm, amssymb, url}
\usepackage[margin=2cm]{geometry}
\usepackage{multirow}
\usepackage{tikz, tkz-graph, mathrsfs, enumerate}
\usepackage{authblk}

\usetikzlibrary{arrows}
\newcommand{\vertex}[3]{\node [vertex] (#1) at (#2, #3 * 1.7) {};}

\newcommand{\arc}[2]{{\draw[-latex] (#1) edge (#2);}}

\newcommand{\Tran}{\mathrm{Tran}}
\newcommand{\Sym}{\mathrm{Sym}}

\newcommand{\id}{\mathrm{id}}
\newcommand{\CA}{\mathrm{CA}}
\newcommand{\ICA}{\mathrm{ICA}}
\newcommand{\Rank}{\mathrm{Rank}}
\newcommand{\Sub}{\mathrm{Sub}}
\newcommand{\ac}{\mathrm{ac}}
\newcommand{\Fix}{\mathrm{Fix}}
\newcommand{\Aut}{\mathrm{Aut}}

\theoremstyle{plain}

\newtheorem{corollary}{Corollary}
\newtheorem{lemma}{Lemma}

\newtheorem{theorem}{Theorem}
\newtheorem*{claim}{Claim}

\theoremstyle{definition}
\newtheorem{definition}{Definition}
\newtheorem{problem}{Problem}
\newtheorem{example}{Example}
\newtheorem{remark}{Remark}

\begin{document}

\title{Cellular Automata and Finite Groups}
\author{Alonso Castillo-Ramirez and Maximilien Gadouleau}

\newcommand{\Addresses}{{
  \bigskip
  \footnotesize

A. Castillo-Ramirez (Corresponding author), \textsc{Universidad de Guadalajara, CUCEI, Departamento de Matem\'aticas, Guadalajara, M\'exico.} \par \nopagebreak
Email: \texttt{alonso.castillor@academicos.udg.mx}

\medskip

M. Gadouleau, \textsc{School of Engineering and Computing Sciences, Durham University, South Road, Durham DH1 3LE, U.K.} \par \nopagebreak
 Email: \texttt{m.r.gadouleau@durham.ac.uk}

}}

\maketitle

\begin{abstract}
For a finite group $G$ and a finite set $A$, we study various algebraic aspects of cellular automata over the configuration space $A^G$. In this situation, the set $\CA(G;A)$ of all cellular automata over $A^G$ is a finite monoid whose basic algebraic properties had remained unknown. First, we investigate the structure of the group of units $\ICA(G;A)$ of $\CA(G;A)$. We obtain a decomposition of $\ICA(G;A)$ into a direct product of wreath products of groups that depends on the numbers $\alpha_{[H]}$ of periodic configurations for conjugacy classes $[H]$ of subgroups of $G$. We show how the numbers $\alpha_{[H]}$ may be computed using the M\"obius function of the subgroup lattice of $G$, and we use this to improve the lower bound recently found by Gao, Jackson and Seward on the number of aperiodic configurations of $A^G$. Furthermore, we study generating sets of $\CA(G;A)$; in particular, we prove that $\CA(G;A)$ cannot be generated by cellular automata with small memory set, and, when all subgroups of $G$ are normal, we determine the relative rank of $\ICA(G;A)$ on $\CA(G;A)$, i.e. the minimal size of a set $V \subseteq \CA(G;A)$ such that $\CA(G;A) = \langle \ICA(G;A) \cup V \rangle$. \\

\textbf{Keywords:} Cellular automata, Invertible cellular automata, Finite groups, Finite monoids, Generating sets.
\end{abstract}

\section{Introduction}\label{intro}

Cellular automata (CA), introduced by John von Neumann and Stanislaw Ulam as an attempt to design self-reproducing systems, are models of computation with important applications to computer science, physics, and theoretical biology. In recent years, the mathematical theory of CA has been greatly enriched by its connections to group theory and topology (e.g., see \cite{CSC10} and references therein). The goal of this paper is to embark in the new task of exploring CA from the point of view of finite group theory and finite semigroup theory.

First of all, we review the broad definition of CA that appears in \cite[Sec.~1.4]{CSC10}. Let $G$ be a group and $A$ a set. Denote by $A^G$ the \emph{configuration space}, i.e. the set of all functions of the form $x:G \to A$. For each $g \in G$, let $R_g : G \to G$ be the right multiplication function, i.e. $(h)R_g := hg$, for any $h \in G$. We emphasise that we apply functions on the right, while in \cite{CSC10} functions are applied on the left.   

\begin{definition} \label{def:ca}
Let $G$ be a group and $A$ a set. A \emph{cellular automaton} over $A^G$ is a transformation $\tau : A^G \to A^G$ such that there is a finite subset $S \subseteq G$, called a \emph{memory set} of $\tau$, and a \emph{local function} $\mu : A^S \to A$ satisfying
\[ (g)(x)\tau = (( R_g \circ x  )\vert_{S}) \mu, \ \forall x \in A^G, g \in G,  \]
where $( R_g \circ x  )\vert_{S}$ denotes the restriction to $S$ of the configuration $ R_g \circ x : G \to A$.
\end{definition} 

Most of the classical literature on CA focuses on the case when $G=\mathbb{Z}^d$, for $d\geq1$, and $A$ is a finite set (e.g. see survey \cite{Ka05}).

A \emph{semigroup} is a set $M$ equipped with an associative binary operation. If there exists an element $\id \in M$ such that $\id \cdot m = m \cdot \id = m$, for all $m \in M$, the semigroup $M$ is called a \emph{monoid} and $\id$ an \emph{identity} of $M$. Clearly, the identity of a monoid is always unique. The \emph{group of units} of $M$ is the set of all \emph{invertible elements} of $M$ (i.e. elements $a \in M$ such that there is $a^{-1} \in M$ with $a \cdot a^{-1} = a^{-1} \cdot a = \id$). 

Let $\CA(G;A)$ be the set of all cellular automata over $A^G$; by \cite[Corollary 1.4.11]{CSC10}, this set equipped with the composition of functions is a monoid. Although results on monoids of CA have appeared in the literature before (see \cite{CRG16a,H12,S15}), the algebraic structure of $\CA(G;A)$ remains basically unknown. In particular, the study of $\CA(G;A)$, when $G$ and $A$ are both finite, has been generally disregarded (except for the case when $G=\mathbb{Z}_n$, which is the study of one-dimensional CA on periodic points). It is clear that many of the classical questions on CA are trivially answered when $G$ is finite (e.g. the Garden of Eden theorems become trivial), but, on the other hand, several new questions, typical of finite semigroup theory, arise in this setting.

In this paper, we study various algebraic properties of $\CA(G;A)$ when $G$ and $A$ are both finite. First, in Section \ref{basic}, we introduce notation and review some basic results. In Section \ref{structure}, we study the group of units $\ICA(G;A)$ of $\CA(G;A)$, i.e. the group of all invertible (also known as reversible) CA over $A^G$. We obtain an explicit decomposition of $\ICA(G;A)$ into a direct product of wreath products of groups that depends on the numbers $\alpha_{[H]}$ of periodic configurations for conjugacy classes $[H]$ of subgroups $H$ of $G$. 

In Section \ref{aperiodic}, we show how the numbers $\alpha_{[H]}$ may be computed using the M\"obius function of the subgroup lattice of $G$, and we give some explicit formulae for special cases. Furthermore, we make a large improvement on the lower bound recently found by Gao, Jackson and Seward \cite{GJS16} on the number of aperiodic configurations of $A^G$.  

Finally, in Section \ref{generating}, we study generating sets of $\CA(G;A)$. A set $T$ of CA is called a \emph{generating set} of $\CA(G;A)$ if every CA over $A^G$ is expressible as a word in the elements of $T$. We prove that $\CA(G;A)$ cannot be generated by CA with small memory sets: every generating set $T$ of $\CA(G;A)$ must contain a cellular automaton with minimal memory set equal to $G$ itself. This result provides a striking contrast with CA over infinite groups because, in such cases, the memory set of any cellular automaton may never be equal to the whole group (as memory sets are finite by definition). Finally, when $G$ is finite abelian, we find the smallest size of a set $V \subseteq \CA(G;A)$ such that $\ICA(G;A) \cup V$ generates $\CA(G;A)$; this number is known in semigroup theory as the \emph{relative rank} of $\ICA(G;A)$ in $\CA(G;A)$, and it turns out to be related with the number of edges of the subgroup lattice of $G$.      

The present paper is an extended version of \cite{CRG16b}. In this version, we added preliminary material in order to make the paper self-contained, we improved the exposition, we generalised several results (e.g. Corollary \ref{cor-conjugate}, Lemma \ref{Dedekind}, and Theorem \ref{th:relative rank}), and we added the completely new Section \ref{aperiodic}.


\section{Basic Results} \label{basic}

For any set $X$, a \emph{transformation} of $X$ is a function of the form $\tau : X \to X$. Let $\Tran(X)$ and $\Sym(X)$ be the sets of all transformations and bijective transformations of $X$, respectively. Equipped with the composition of transformations, $\Tran(X)$ is known as the \emph{full transformation monoid} on $X$, while $\Sym(X)$ is the \emph{symmetric group} on $X$. When $X$ is finite and $\vert X \vert = q$, we write $\Tran_q$ and $\Sym_q$ instead of $\Tran(X)$ and $\Sym(X)$, respectively. A \emph{finite transformation monoid} is simply a submonoid of $\Tran_q$, for some $q$. This type of monoids has been extensively studied (e.g. see \cite{GM09} and references therein), and it should be noted its close relation to finite-state machines. 

Recall that the \emph{order} of a group $G$ is simply the cardinality of $G$ as a set. For the rest of the paper, let $G$ be a finite group of order $n$ and $A$ a finite set of size $q$. By Definition \ref{def:ca}, it is clear that $\CA(G;A) \leq \Tran(A^G)$ (where we use the symbol ``$\leq$'' for the submonoid relation). We may always assume that $\tau \in \CA(G;A)$ has (not necessarily minimal) memory set $S = G$, so $\tau$ is completely determined by its local function $\mu: A^G \to A$. Hence, $\vert \CA(G ; A) \vert = q^{q^n}$. 

If $n=1$, then $\CA(G;A) = \Tran(A)$, while, if $q \leq 1$, then $\CA(G;A)$ is the trivial monoid with one element; henceforth, we assume $n \geq 2$ and $q \geq 2$. Without loss of generality, we identify $A$ with the set $\{0, 1, \dots, q-1 \}$ and we denote the identity element of $G$ by $e$.

A \emph{group action} of $G$ on a set $X$ is a function $\cdot : X \times G \to X$ such that $(x \cdot g) \cdot h = x \cdot gh$ and $x \cdot e = x$ for all $x \in X$, $g,h \in G$ (where we denote the image of a pair $(x,g)$ by $x \cdot g$). A group $G$ acts on the configuration space $A^G$ as follows: for each $g \in G$ and $x \in A^G$, the configuration $x \cdot g \in A^G$ is defined by 
\begin{equation} \label{action}
(h)x \cdot g = (hg^{-1})x, \quad \forall h \in G. 
\end{equation}
This indeed defines a group action because:
\begin{description}
\item[(i)] For all $h \in G$, $(h)x \cdot e = (h)x$, so $x \cdot e = x$. 
\item[(ii)] For all $h, g_1, g_2 \in G$, 
\[ (h) (x \cdot g_1) \cdot g_2 = (h g_2^{-1}) x \cdot g_1 = (h g_2^{-1}g_1^{-1})x = (h (g_1g_2)^{-1} ) x = (h) x \cdot g_1g_2,  \]
so $ (x \cdot g_1) \cdot g_2  = x \cdot g_1 g_2$.
\end{description}
Note that in equation (\ref{action}), $h$ has to be multiplied by the inverse of $g$ and not by $g$ itself, as property (ii) may not hold in the latter case when $G$ is non-abelian.

\begin{definition}
A transformation $\tau : A^G \to A^G$ is \emph{$G$-equivariant} if, for all $x \in A^G$, $g \in G$,
\[ (x \cdot g) \tau =  (x) \tau \cdot g .\]
\end{definition} 

\begin{theorem} \label{AG-finite}
Let $G$ be a finite group and $A$ a finite set. Then, 
\[ \CA(G;A) = \{ \tau \in \Tran(A^G) : \tau \text{ is $G$-equivariant} \}. \]
\end{theorem}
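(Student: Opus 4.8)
The plan is to prove the two inclusions separately. The forward inclusion, that every cellular automaton is $G$-equivariant, should hold for an arbitrary group, whereas the reverse inclusion is where the finiteness of $G$ becomes essential.

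First I would show that $\CA(G;A)$ is contained in the set of $G$-equivariant transformations. Take $\tau$ with memory set $S$ and local function $\mu$, fix $x \in A^G$ and $g \in G$, and verify that $(x \cdot g)\tau$ and $(x)\tau \cdot g$ agree at every point $h \in G$. The strategy is to unfold both sides using Definition \ref{def:ca} together with equation (\ref{action}): evaluating $(x \cdot g)\tau$ at $h$ yields $\mu$ applied to the restriction of $s \mapsto (shg^{-1})x$, while evaluating $(x)\tau \cdot g$ at $h$ first shifts to $hg^{-1}$ and then applies the local rule, also producing $\mu$ applied to $s \mapsto (shg^{-1})x$. Since the two inputs to $\mu$ coincide, the values agree. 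The only care needed here is bookkeeping with the right-application convention and the inverse appearing in the action.

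For the reverse inclusion I would use finiteness decisively: since $G$ is finite, $S = G$ is an admissible (finite) memory set, so it suffices to produce a single local function $\mu : A^G \to A$ realising a given $G$-equivariant $\tau$. The natural candidate is $\mu(y) := (e)(y)\tau$, the value of $(y)\tau$ at the identity $e$. I would then check the cellular automaton rule with $S = G$: observing that $R_g \circ x$ equals the configuration $x \cdot g^{-1}$, the rule reduces to verifying $(g)(x)\tau = (e)\bigl((x \cdot g^{-1})\tau\bigr)$. Applying $G$-equivariance to rewrite $(x \cdot g^{-1})\tau = (x)\tau \cdot g^{-1}$, and then evaluating at $e$ via (\ref{action}), collapses $(e)\bigl((x)\tau \cdot g^{-1}\bigr)$ to $(g)(x)\tau$, which closes the argument.

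I expect the main obstacle to be purely notational rather than conceptual: the combination of right-applied functions, the composition $R_g \circ x$, and the inverses in the action make it easy to misplace a $g$ or a $g^{-1}$. The conceptual crux—recognising that finiteness lets one take the whole group as memory set and recover the local function by evaluation at $e$—is short, so the bulk of the work lies in carrying out the two index computations correctly and confirming that the arguments fed to $\mu$ on each side genuinely coincide.
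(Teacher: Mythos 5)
Your proof is correct, but it takes a genuinely different route from the paper. The paper's proof is a two-line appeal to the Curtis--Hedlund theorem (\cite[Theorem 1.8.1]{CSC10}): a transformation of $A^G$ is a cellular automaton if and only if it is $G$-equivariant and continuous in the prodiscrete topology, and since $G$ and $A$ are both finite, $A^G$ is a finite discrete space, so \emph{every} transformation is continuous and the topological condition evaporates. Your argument instead re-proves the relevant instance of that theorem from scratch: the forward inclusion is the standard index computation showing every cellular automaton is $G$-equivariant (valid for arbitrary $G$, as you note), and for the converse you exploit finiteness to take $S = G$ as an admissible memory set and recover the local function by evaluation at the identity, $(y)\mu := (e)(y)\tau$, with $G$-equivariance being exactly what makes this rule reproduce $\tau$. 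Both of your computations check out under the paper's right-action conventions; in particular the identity $R_g \circ x = x \cdot g^{-1}$, which underlies both directions, is the correct bookkeeping of the inverse in equation (\ref{action}). What your approach buys is self-containedness: no topology and no external black box, only Definition \ref{def:ca} and the action. What the paper's approach buys is brevity and context --- it situates the finite case inside the general Curtis--Hedlund framework --- but it hides the (easy) explicit construction of the local function inside the citation, which is precisely the content you make visible.
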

\begin{proof}
By Curtis-Hedlund Theorem (see \cite[Theorem 1.8.1]{CSC10}), a transformation $\tau \in \Tran(A^G)$ is a cellular automaton if and only if $\tau$ is $G$-equivariant and continuous in the prodiscrete topology of $A^G$ (i.e. the product topology of the discrete topology). However, as both $G$ and $A$ are finite, every transformation in $\Tran(A^G)$ is continuous, and the result follows. 
\end{proof}

In other words, the previous result means that $\CA(G;A)$ is the endomorphism monoid of the $G$-set $A^G$. This result allows us to study $\CA(G;A)$ form a pure algebraic perspective.  

We review a few further basic concepts on group actions (see \cite[Ch. 1]{DM96}). For $x \in A^G$, denote by $G_x$ the \emph{stabiliser} of $x$ in $G$:
\[ G_x : = \{g \in G : x \cdot g = x \}.\]

\begin{remark} \label{rk:subgroups}
For any subgroup $H \leq G$ there exists $x \in A^G$ such that $G_x = H$; namely, we may define $x : G \to A$ by
\[ (g)x := \begin{cases}
1 & \text{if } g \in H, \\
0 & \text{otherwise},
\end{cases} \quad \forall g \in G. \]
\end{remark}

For any $x \in A^G$, denote by $xG$ the \emph{$G$-orbit} of $x$ on $A^G$:
\[ xG := \{ x \cdot g : g \in G  \}. \]
Let $\mathcal{O}(G;A)$ be the set of all $G$-orbits on $A^G$:
\[ \mathcal{O}(G;A) := \{ xG : x \in A^G\}.\] 
It turns out that $\mathcal{O}(G;A)$ forms a partition of $A^G$. The following result is known as the Orbit-Stabiliser Theorem (see \cite[Theorem 1.4A.]{DM96}).

\begin{theorem} \label{orbit-stabiliser}
Let $G$ be a finite group and $A$ a finite set. For any $x \in A^G$,
\[ \vert x G \vert = \frac{\vert G \vert}{\vert G_x \vert}.\]
Moreover, if $x = y \cdot g$ for some $x,y \in A^G$, $g \in G$, then $G_x = g^{-1} G_y g$.
\end{theorem}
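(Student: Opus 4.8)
The plan is to prove the two assertions separately, each by a direct computation with the group action of equation (\ref{action}). For the cardinality formula, I would construct an explicit bijection between the set of right cosets of $G_x$ in $G$ and the orbit $xG$, so that the claim follows from Lagrange's theorem, which gives that the number of right cosets is $|G|/|G_x|$. For the conjugacy statement, I would unwind the definition of the stabiliser and use the right-action identity $(y \cdot g) \cdot h = y \cdot gh$ established just before the statement.

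First I would fix $x \in A^G$ and consider the map $\phi : G \to xG$ defined by $(g)\phi := x \cdot g$, which is surjective by the very definition of $xG$. The key step is to determine when two group elements have the same image: using the action property, $x \cdot g = x \cdot g'$ holds if and only if $x \cdot (g g'^{-1}) = x$ (apply $\cdot\, g'^{-1}$ to both sides), i.e.\ if and only if $g g'^{-1} \in G_x$, which is precisely the condition that $g$ and $g'$ lie in the same right coset $G_x g'$. Hence $\phi$ induces a well-defined bijection from the set of right cosets $\{G_x g : g \in G\}$ onto $xG$, and therefore $|xG| = |G|/|G_x|$.

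For the second assertion, suppose $x = y \cdot g$. I would establish $h \in G_x \iff h \in g^{-1} G_y g$ by a chain of equivalences: the equation $x \cdot h = x$ becomes $(y \cdot g) \cdot h = y \cdot g$, which by the action property rearranges (applying $\cdot\, g^{-1}$) to $y \cdot (g h g^{-1}) = y$, i.e.\ $g h g^{-1} \in G_y$, i.e.\ $h \in g^{-1} G_y g$. This yields $G_x = g^{-1} G_y g$, as required.

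Since this is a classical result, there is no genuine obstacle; the only point demanding care is the bookkeeping of cosets forced by the right-action convention of equation (\ref{action}). Because $h$ is multiplied by $g^{-1}$ in that definition, the natural indexing of fibres of $\phi$ is by right cosets $G_x g$ rather than left cosets, and one must verify that the induced map on cosets is independent of the chosen representative before concluding that it is a bijection.
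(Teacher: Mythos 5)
Your proof is correct and complete. Note that the paper does not actually prove this statement: it is quoted as the Orbit--Stabiliser Theorem with a citation to \cite[Theorem 1.4A]{DM96}, so there is no internal argument to compare against, and your write-up is precisely the standard proof found in that reference. The chain of equivalences $x \cdot g = x \cdot g' \iff x \cdot (gg'^{-1}) = x \iff gg'^{-1} \in G_x \iff G_x g = G_x g'$ shows that the fibres of $g \mapsto x \cdot g$ are exactly the right cosets of $G_x$, giving $\vert xG \vert = \vert G \vert / \vert G_x \vert$ by Lagrange, and your conjugation computation $x \cdot h = x \iff y \cdot (ghg^{-1}) = y$ correctly yields $G_{y \cdot g} = g^{-1} G_y g$. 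The one delicate point you flag---that the convention $(h)x \cdot g = (hg^{-1})x$ of equation (\ref{action}) forces right cosets rather than left cosets---is handled correctly, since both directions of your equivalence give well-definedness and injectivity of the induced map on cosets.
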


 In general, when $X$ is a set and $\mathcal{P}$ is a partition of $X$, we say that a transformation monoid $M \leq \Tran(X)$ \emph{preserves the partition} if, for any $P \in \mathcal{P}$ and $\tau \in M$ there is $Q \in \mathcal{P}$ such that $(P)\tau \subseteq Q$.

\begin{lemma} \label{preserve}
For any $x \in A^G$ and $\tau \in \CA(G;A)$,
\[ (xG) \tau = (x)\tau G. \]
In particular, $\CA(G;A)$ preserves the partition $\mathcal{O}(G;A)$ of $A^G$.
\end{lemma}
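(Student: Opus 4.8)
The plan is to prove the identity $(xG)\tau = (x)\tau\, G$ directly from $G$-equivariance (Theorem \ref{AG-finite}), and then deduce the partition-preserving property as an immediate consequence. First I would unwind the definition of the orbit: by definition $xG = \{x\cdot g : g \in G\}$, so applying $\tau$ pointwise gives $(xG)\tau = \{(x\cdot g)\tau : g \in G\}$. The key step is to invoke $G$-equivariance, which says $(x\cdot g)\tau = (x)\tau \cdot g$ for every $g \in G$; substituting this term by term transforms the set into $\{(x)\tau \cdot g : g \in G\}$, which is precisely $(x)\tau\, G$, the orbit of $(x)\tau$. This establishes the displayed equation.

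For the ``in particular'' clause, I would observe that the orbits in $\mathcal{O}(G;A)$ are exactly the blocks of the partition, so to show $\CA(G;A)$ preserves $\mathcal{O}(G;A)$ it suffices to check that each $\tau \in \CA(G;A)$ maps each block into a block. Taking an arbitrary block $P = xG \in \mathcal{O}(G;A)$, the identity just proved gives $(P)\tau = (xG)\tau = (x)\tau\, G$, and the right-hand side is itself an orbit, hence a block $Q \in \mathcal{O}(G;A)$. Thus $(P)\tau \subseteq Q$ (in fact equality holds), which is exactly the partition-preserving condition.

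I do not anticipate a genuine obstacle here; the lemma is essentially a restatement of equivariance at the level of orbits. The only subtlety worth flagging is notational care with the right-action convention used throughout the paper: functions are applied on the right, and $G$ acts on $A^G$ via $(h)x\cdot g = (hg^{-1})x$, so I would make sure the equivariance relation is applied in the correct order and that ``$(x)\tau\, G$'' is read as the orbit of the configuration $(x)\tau$ under this action. Provided these conventions are handled consistently, the proof is a one-line computation followed by the trivial deduction for the partition.
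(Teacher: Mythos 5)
Your proof is correct and follows exactly the paper's approach: the paper's own proof is a one-line appeal to the $G$-equivariance of $\tau$, which your argument simply unpacks pointwise, so the two are essentially identical.
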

\begin{proof}
The result follows by the $G$-equivariance of $\tau \in \CA(G;A)$.   
\end{proof}

 A configuration $x \in A^G$ is called \emph{constant} if $(g)x = k \in A$, for all $g \in G$. In such case, we usually denote $x$ by $\mathbf{k} \in A^G$. 

\begin{lemma} \label{constant-config}
 Let $\tau \in \CA(G;A)$ and let $\mathbf{k} \in A^G$ be a constant configuration. Then, $(\mathbf{k}) \tau \in A^G$ is a constant configuration.
\end{lemma}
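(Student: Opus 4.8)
The plan is to exploit the fact that constant configurations are precisely the fixed points of the $G$-action on $A^G$, and then to push this property through $\tau$ using its $G$-equivariance (equivalently, Lemma \ref{preserve}). First I would verify that the constant configuration $\mathbf{k}$ satisfies $\mathbf{k} \cdot g = \mathbf{k}$ for every $g \in G$. Indeed, by the definition of the action in equation (\ref{action}), for any $h \in G$ we have $(h)\mathbf{k} \cdot g = (hg^{-1})\mathbf{k} = k = (h)\mathbf{k}$, since $\mathbf{k}$ takes the constant value $k$ everywhere; hence $\mathbf{k} \cdot g = \mathbf{k}$, so $G_{\mathbf{k}} = G$ and the orbit $\mathbf{k}G = \{\mathbf{k}\}$ is a singleton.

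Next I would transfer this to the image. Since $\tau$ is $G$-equivariant (Theorem \ref{AG-finite}), for every $g \in G$ we get $(\mathbf{k})\tau \cdot g = (\mathbf{k} \cdot g)\tau = (\mathbf{k})\tau$, using the previous step. Thus $(\mathbf{k})\tau$ is itself fixed by the whole of $G$; equivalently, by Lemma \ref{preserve} one has $(\mathbf{k})\tau\, G = (\mathbf{k}G)\tau = \{(\mathbf{k})\tau\}$, so the orbit of $(\mathbf{k})\tau$ is also a singleton.

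It then remains to argue that any configuration $y \in A^G$ fixed by every element of $G$ must be constant, which is the only genuinely substantive (though still routine) step. Writing out $y \cdot g = y$ via equation (\ref{action}) gives $(hg^{-1})y = (h)y$ for all $h, g \in G$; specialising to $h = e$ yields $(g^{-1})y = (e)y$ for all $g \in G$, and as $g$ ranges over $G$ so does $g^{-1}$, so $y$ takes the single value $(e)y$ on all of $G$. Applying this to $y = (\mathbf{k})\tau$ shows that $(\mathbf{k})\tau$ is constant, completing the argument. The main point to be careful about is precisely this last equivalence between fixed points and constant configurations, where the inverse appearing in the action (as emphasised after equation (\ref{action})) is what makes the reindexing $g \mapsto g^{-1}$ go through cleanly.
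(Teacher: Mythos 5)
Your proof is correct and follows essentially the same route as the paper's: both reduce the claim to the observation that a configuration is constant if and only if it is fixed by the whole $G$-action, and then transfer this fixed-point property through $\tau$ via $G$-equivariance. The only difference is that you spell out both directions of that equivalence (including the reindexing $g \mapsto g^{-1}$), which the paper leaves as an immediate observation.
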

\begin{proof}
Observe that $x \in A^G$ is constant if and only if  $x \cdot g = x$, for all $g \in G$. By $G$-equivariance,
\[ (\mathbf{k}) \tau =  (\mathbf{k} \cdot g) \tau =  (\mathbf{k}) \tau \cdot g,  \quad \forall g \in G. \]
Hence, $(\mathbf{k})\tau$ is constant.  
\end{proof}

A \emph{subshift} of $A^G$ is a subset $X \subseteq A^G$ that is \emph{$G$-invariant}, i.e. for all $x \in X$, $g \in G$, we have $x \cdot g \in X$, and closed in the prodiscrete topology of $A^G$. As $G$ and $A$ are finite, the subshifts of $A^G$ are simply unions of $G$-orbits in $A^G$. 

The actions of $G$ on two sets $X$ and $Y$ are \emph{equivalent} if there is a bijection $\lambda : X \to Y$ such that, for all $x \in X, g \in G$, we have $(x \cdot g )\lambda = (x)\lambda \cdot g$.  

Two subgroups $H_1$ and $H_2$ of $G$ are \emph{conjugate} in $G$ if there exists $g \in G$ such that $g^{-1} H_1 g = H_2$. This defines an equivalence relation on the subgroups of $G$. Denote by $[H]$ the conjugacy class of $H \leq G$. If $y$ and $z$ are two configurations in the same $G$-orbit, then by Theorem \ref{orbit-stabiliser} we have $[G_y] = [G_z]$.

We use the cyclic notation for the permutations of $\Sym(A^G)$. If $B \subseteq A^G$ and $a \in A^G$, we define the idempotent transformation $(B \to a) \in \Tran(A^G)$ by
\[ (x)(B \to a) := \begin{cases}
a & \text{ if } x \in B, \\
x & \text{ otherwise},  
 \end{cases} \quad \forall x \in A^G. \]
When $B=\{ b\}$ is a singleton, we write $(b \to a)$ instead of $(\{ b\} \to a)$.


\section{The Structure of $\ICA(G;A)$} \label{structure}

Denote by $\ICA(G;A)$ the group of all invertible cellular automata:
\[ \ICA(G;A) := \{ \tau \in \CA(G;A) : \exists \tau^{-1} \in \CA(G;A) \text{ such that } \tau\tau^{-1} = \tau^{-1} \tau = \id \}. \]

As the inverse of a bijective $G$-equivariant map is also $G$-equivariant, it follows by Theorem \ref{AG-finite} that $\ICA(G;A) = \CA(G;A) \cap \Sym(A^G)$.

The following is an essential result for our description of the structure of the group of invertible cellular automata.

\begin{lemma}\label{le-ICA}
Let $G$ be a finite group of order $n \geq 2$ and $A$ a finite set of size $q \geq 2$. Let $x,y \in A^G$ be such that $xG \neq yG$. Then, there exists $\tau \in \ICA(G;A)$ such that $(x)\tau = y$ if and only if $G_x =  G_y$.
\end{lemma}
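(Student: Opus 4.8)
The plan is to prove the two implications separately. The forward direction (that $G_x = G_y$ is necessary) should fall out immediately from $G$-equivariance, while the reverse direction (sufficiency) will require constructing an explicit invertible cellular automaton that swaps the orbits $xG$ and $yG$.

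First I would handle necessity. Suppose $\tau \in \ICA(G;A)$ satisfies $(x)\tau = y$. If $g \in G_x$, then $x \cdot g = x$, and by $G$-equivariance $y = (x)\tau = (x \cdot g)\tau = (x)\tau \cdot g = y \cdot g$, so $g \in G_y$; this gives $G_x \subseteq G_y$. Since $\tau^{-1} \in \ICA(G;A)$ with $(y)\tau^{-1} = x$, the same argument applied to $\tau^{-1}$ yields $G_y \subseteq G_x$, and hence $G_x = G_y$.

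For sufficiency, assume $G_x = G_y =: H$. By Theorem \ref{orbit-stabiliser} the orbits have equal size $\vert xG \vert = \vert G \vert / \vert H \vert = \vert yG \vert$, and I would define a map $\lambda : xG \to yG$ by $(x \cdot g)\lambda := y \cdot g$ for all $g \in G$. The crux is well-definedness: if $x \cdot g_1 = x \cdot g_2$ then $g_1 g_2^{-1} \in G_x = H = G_y$, whence $y \cdot g_1 = y \cdot g_2$; reading this equivalence backwards gives injectivity, surjectivity is immediate, and so $\lambda$ is a $G$-equivariant bijection with inverse given symmetrically by $(y \cdot g)\lambda^{-1} = x \cdot g$. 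In the terminology introduced above, this simply says that the actions of $G$ on $xG$ and on $yG$ are equivalent. Finally, since $xG \neq yG$ the two orbits are disjoint, so I would define $\tau \in \Tran(A^G)$ to act as $\lambda$ on $xG$, as $\lambda^{-1}$ on $yG$, and as the identity elsewhere. This $\tau$ is an involution (hence a bijection) and is $G$-equivariant because each of $xG$, $yG$, and their complement is $G$-invariant and $\tau$ restricts to a $G$-equivariant map on each piece; thus $\tau \in \CA(G;A) \cap \Sym(A^G) = \ICA(G;A)$ by Theorem \ref{AG-finite}, and $(x)\tau = (x \cdot e)\lambda = y$ as required.

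The hard part is the well-definedness of $\lambda$, which is exactly where the hypothesis $G_x = G_y$ is indispensable: it is what upgrades the mere equality of orbit sizes into a genuine equivalence of $G$-actions between $xG$ and $yG$. The assumption $xG \neq yG$ plays a complementary role, ensuring the orbits are disjoint so that the two partial bijections $\lambda$ and $\lambda^{-1}$ patch together into a single well-defined permutation of $A^G$ without conflict.
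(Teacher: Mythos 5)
Your proposal is correct and follows essentially the same route as the paper: the necessity argument via $G$-equivariance applied to $\tau$ and $\tau^{-1}$ is identical, and your map $\tau$ (acting as $\lambda$ on $xG$, $\lambda^{-1}$ on $yG$, identity elsewhere) is exactly the paper's swap involution $x \cdot g \mapsto y \cdot g$, $y \cdot g \mapsto x \cdot g$, with the same well-definedness check hinging on $G_x = G_y$. Your explicit remark that $xG \neq yG$ forces the orbits to be disjoint, so the cases do not conflict, is a point the paper leaves implicit but uses in the same way.
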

\begin{proof}
Suppose first that there is $\tau \in \ICA(G;A)$ such that $(x)\tau = y$. Let $g \in G_x$. Then, $y \cdot g = (x) \tau \cdot g = (x \cdot g) \tau = (x) \tau = y$, so $g \in G_y$. This shows that $G_x \leq G_y$. Now, let $h \in G_y$. Then, $x \cdot h = (y) \tau^{-1} \cdot h = (y \cdot h) \tau^{-1} = (y) \tau^{-1} = x$, where $\tau^{-1} \in \ICA(G;A)$ is the inverse of $\tau$, so $h \in G_x$. Therefore, $G_x = G_y$.

Suppose now that $G_x = G_y$. We define a map $\tau : A^G \to A^G$ as follows:
\[ (z)\tau := \begin{cases}
y \cdot g & \text{if } z = x \cdot g, \\
x \cdot g & \text{if } z = y \cdot g, \\
z & \text{otherwise},
\end{cases} \quad \quad \forall z \in A^G. \]   
We check that $\tau$ is well-defined:
\[ x \cdot g = x \cdot h \ \Leftrightarrow \ g h^{-1} \in G_x = G_y \ \Leftrightarrow \ y \cdot g = y \cdot h; \]
therefore, every element of $A^G$ has a unique image under $\tau$. Clearly, $\tau$ is $G$-equivariant and invertible (in fact, $\tau = \tau^{-1}$). Hence $\tau \in \ICA(G;A)$, and it satisfies $(x)\tau = y$.  
\end{proof}

\begin{corollary}\label{conjugate}
Under the assumptions of Lemma \ref{conjugate}, there exists $\tau \in \ICA(G;A)$ such that $(xG)\tau = yG$ if and only if $[G_x] = [ G_y]$
\end{corollary}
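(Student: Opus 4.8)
The plan is to prove both implications by combining Lemma \ref{preserve} (which gives $(xG)\tau = (x)\tau G$), the Orbit-Stabiliser Theorem, and the single-configuration existence result of Lemma \ref{le-ICA}. Note that the assumptions referenced are those of Lemma \ref{le-ICA}: $G$ is finite of order $n \geq 2$, $A$ is finite of size $q \geq 2$, and $x, y \in A^G$ satisfy $xG \neq yG$.

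For the forward direction, suppose $\tau \in \ICA(G;A)$ satisfies $(xG)\tau = yG$. First I would apply Lemma \ref{preserve} to rewrite the hypothesis as $(x)\tau G = yG$, so that $(x)\tau$ lies in the orbit $yG$. The key step is to observe that an invertible $G$-equivariant map preserves stabilisers exactly: repeating verbatim the two inclusions from the proof of Lemma \ref{le-ICA} (one using $\tau$, the other using $\tau^{-1}$) yields $G_{(x)\tau} = G_x$. Since $(x)\tau$ and $y$ lie in the same $G$-orbit, Theorem \ref{orbit-stabiliser} gives $[G_{(x)\tau}] = [G_y]$, and combining the two facts produces $[G_x] = [G_y]$.

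For the reverse direction, suppose $[G_x] = [G_y]$, so there is $g \in G$ with $g^{-1} G_x g = G_y$. The idea is to replace $x$ by a representative of its orbit whose stabiliser is exactly $G_y$. Setting $x' := x \cdot g$, the second part of Theorem \ref{orbit-stabiliser} gives $G_{x'} = g^{-1} G_x g = G_y$, while plainly $x'G = xG$. Since $x'G = xG \neq yG$, the pair $x', y$ satisfies the hypotheses of Lemma \ref{le-ICA}, which furnishes $\tau \in \ICA(G;A)$ with $(x')\tau = y$. Applying Lemma \ref{preserve} once more gives $(xG)\tau = (x'G)\tau = (x')\tau G = yG$, as required.

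I do not anticipate a serious obstacle here, since the corollary is essentially the orbit-level version of Lemma \ref{le-ICA}. The one subtle point lies in the reverse direction: one must pass from the abstract conjugating element $g$ to a concrete configuration $x'$ in the orbit of $x$ whose stabiliser equals $G_y$ \emph{on the nose}, rather than merely up to conjugacy, so that Lemma \ref{le-ICA} becomes applicable. One also has to verify that $x'G \neq yG$ in order to meet that lemma's hypothesis, but this is immediate from $x'G = xG \neq yG$.
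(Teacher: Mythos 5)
Your proposal is correct and follows essentially the same route as the paper: both directions reduce to Lemma \ref{le-ICA} via Lemma \ref{preserve} and the conjugation of stabilisers from Theorem \ref{orbit-stabiliser}, the only cosmetic difference being that you shift $x$ to $x' = x \cdot g$ while the paper shifts $y$ to $y \cdot g$. The subtle point you flag (arranging equality of stabilisers on the nose, and checking $x'G \neq yG$ so the lemma applies) is exactly the content of the paper's argument.
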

\begin{proof}
Suppose that $(xG)\tau = yG$. Then, $(x)\tau = y \cdot g$, for some $g \in G$. By Lemma \ref{conjugate}, $G_x = G_{y \cdot g}$. However, note that $G_{y \cdot g} = g^{-1} G_y g$, so $[G_x] = [G_y]$. Conversely, if $[G_x] = [G_y]$, then $G_x = g^{-1} G_y g = G_{y \cdot g}$, for some $g \in G$. By Lemma \ref{conjugate}, there exists $\tau \in \ICA(G;A)$ such that $(x)\tau = y \cdot g$, and by Lemma \ref{preserve}, $(xG)\tau = yG$.
\end{proof}

A subgroup $H \leq G$ is \emph{normal} if $[H] = \{ H \}$ (i.e. $g^{-1}H g = H$ for all $g \in G$).

\begin{corollary}\label{cor-conjugate}
Suppose that $G$ is a finite group whose subgroups are all normal. For any $x,y \in A^G$, there exists $\tau \in \ICA(G;A)$ such that $(xG)\tau = yG$ if and only if $G_x = G_y$.
\end{corollary}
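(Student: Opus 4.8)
The plan is to deduce this statement directly from Corollary~\ref{conjugate}, which already characterises the existence of such a $\tau$ by the condition $[G_x] = [G_y]$. The only extra ingredient I need is the observation that, for a group all of whose subgroups are normal, the conjugacy class of each subgroup collapses to a singleton.

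First I would recall the definition of normal subgroup stated just before the corollary: a subgroup $H \leq G$ is normal precisely when $[H] = \{H\}$. Since $G_x$ and $G_y$ are themselves subgroups of $G$ (being stabilisers), the hypothesis that every subgroup of $G$ is normal gives $[G_x] = \{G_x\}$ and $[G_y] = \{G_y\}$. Consequently the equality of conjugacy classes $[G_x] = [G_y]$ is equivalent to the equality of singletons $\{G_x\} = \{G_y\}$, which holds if and only if $G_x = G_y$.

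With this equivalence in hand, the result is immediate: by Corollary~\ref{conjugate} there exists $\tau \in \ICA(G;A)$ with $(xG)\tau = yG$ if and only if $[G_x] = [G_y]$, and by the preceding paragraph this last condition is the same as $G_x = G_y$. Both directions of the claimed ``if and only if'' therefore follow at once.

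I do not expect any genuine obstacle here, since the statement is a specialisation of Corollary~\ref{conjugate}. The only point requiring care is recognising that the correct route is to reduce to the conjugacy-class criterion already established, rather than to rebuild a transposition-type $\tau$ by hand from Lemma~\ref{le-ICA}; the latter is possible but redundant. (Should one wish to argue directly, one would note that when $xG = yG$ normality forces $G_x = G_y$, while when $xG \neq yG$ the equality $G_x = G_y$ lets Lemma~\ref{le-ICA} produce $\tau$ with $(x)\tau = y$, whence $(xG)\tau = yG$ by Lemma~\ref{preserve}.)
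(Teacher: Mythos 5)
Your proof is correct and matches the paper's intended derivation: the paper states Corollary~\ref{cor-conjugate} without any written proof precisely because it is the specialisation of Corollary~\ref{conjugate} you describe, where normality of every subgroup collapses each conjugacy class $[H]$ to the singleton $\{H\}$, making $[G_x]=[G_y]$ equivalent to $G_x=G_y$. You are in fact slightly more careful than the paper, since your closing remark also covers the case $xG = yG$ (formally excluded by the hypotheses that Corollary~\ref{conjugate} inherits from Lemma~\ref{le-ICA}), where the identity map serves as $\tau$ and normality together with Theorem~\ref{orbit-stabiliser} gives $G_x = G_y$.
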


Groups whose subgroups are all normal are called \emph{Dedekind groups}. Clearly, abelian groups are always Dedekind. The finite non-abelian Dedekind groups (also known as finite \emph{Hamiltonian groups}) were classified by Richard Dedekind in \cite{D97} and have the form $Q_8 \times (\mathbb{Z}_2)^n \times H$, where $Q_8$ is the quaternion group (i.e. the group of units of the quaternions $\mathbb{H}$), $n \geq 0$, and $H$ is a finite abelian group of odd order. Several of our stronger results on $\CA(G;A)$ will hold when $G$ is a finite Dedekind group.

For any integer $\alpha \geq 2$ and any group $C$, the \emph{wreath product} of $C$ by $\Sym_\alpha$ is the set
\[ C \wr \Sym_{\alpha} := \{ (v; \phi) : v \in C ^\alpha, \phi \in \Sym_\alpha \} \]
equipped with the operation
\[ (v;\phi) \cdot (w; \psi) = ( v w^{\phi}; \phi \psi), \text{ for any } v,w \in C^\alpha, \phi, \psi \in \Sym_\alpha, \]
where $\phi$ acts on $w$ by permuting its coordinates:
\[  w^\phi = (w_1, w_2, \dots, w_\alpha)^\phi := (w_{(1)\phi}, w_{(2)\phi}, \dots, w_{(\alpha)\phi}). \]
In fact, as may be seen from the above definitions, $C \wr \Sym_{\alpha}$ is equal to the external semidirect product $C^{\alpha} \rtimes_{\varphi} \Sym_{\alpha}$, where $\varphi : \Sym_\alpha \to \Aut(C^{\alpha})$ is given by $(w)(\phi)\varphi := w^\phi$, for all $w \in C^\alpha$, $\phi \in \Sym_\alpha$. For a more detailed description of the wreath product of groups, see \cite[Sec. 2.6]{DM96}.

Let $O \in \mathcal{O}(G;A)$ be a $G$-orbit. If $G_{(O)}$ is the pointwise stabiliser of $O$, i.e. $G_{(O)} := \bigcap_{x \in O} G_x$, the group $G^O := G / G_{(O)}$ is isomorphic to a subgroup of $\Sym(O)$ (as the homomorphism $\rho : G \to \Sym(O)$ given by $(x)(g)\rho = x \cdot g$, for all $x \in O, g \in G$, has kernel $G_{(O)}$;  see \cite[p. 17]{DM96} for more details). Abusing the notation, we also write $G^O$ for the isomorphic copy of $G^O$ inside $\Sym(O)$. Define the group
\begin{equation} \label{centraliser}
\ICA(O) := \{ \tau \in \Sym(O) : \tau \text{ is $G$-equivariant} \}.  
\end{equation}
Note that $\ICA(O)$ is isomorphic to the centraliser of $G^O$ in $\Sym(O)$: 
\[ \ICA(O) \cong C_{\Sym(O)}(G^O). \]

Let $H$ be a subgroup of $G$ and $[H]$ its conjugacy class. Define 
\[ B_{[H]}(G;A) := \{ x \in A^G : [G_x] = [H] \}. \]
Note that $B_{[H]}(G;A)$ is a subshift of $A^G$ (i.e. a union of $G$-orbits) and, by Theorem \ref{orbit-stabiliser}, all the $G$-orbits contained in $B_{[H]}(G;A)$ have equal sizes. Define 
\[ \alpha_{[H]} (G; A) := \left\vert \left\{ O \in \mathcal{O}(G,A) : O \subseteq B_{[H]}(G;A)   \right\} \right\vert.  \]
If $r$ is the number of different conjugacy classes of subgroups of $G$, observe that 
\[ \mathcal{B} := \{ B_{[H]}(G;A) : H \leq G \}\]
is a partition of $A^G$ into $r$ blocks. When $G$ and $A$ are clear from the context, we write simply $B_{[H]}$ and $\alpha_{[H]}$ instead of $B_{[H]}(G;A)$ and $\alpha_{[H]}(G;A)$, respectively. 

\begin{remark}
For any $G$ and $A$, we have $B_{[G]}(G;A) = \{ x \in A^G : x \text{ is constant} \}$ and $\alpha_{[G]} (G; A) = \vert A \vert$.
\end{remark}

\begin{example}
Let $G \cong \mathbb{Z}_n = \{0,1,\dots,n-1 \}$ be a cyclic group of order $n \geq 2$ and let $A$ be a finite set of size $q \geq 2$. Any configuration $x : \mathbb{Z}_n \to A$ may be represented by a $n$-tuple $(x_1, x_2, \dots, x_n)$ such that $x_i := (i-1)x$. The action of $\mathbb{Z}_n$ on $A^G$ correspond to cyclic shifts of the $n$-tuples; for example,
\[ (x_1, x_2, \dots, x_n) \cdot 1 = (x_n, x_1, x_2, \dots, x_{n-1}).  \]
As $\mathbb{Z}_n$ has a unique subgroup $\mathbb{Z}_d$ for each $d \mid n$, we have
\[ \alpha_{[\mathbb{Z}_d]} (\mathbb{Z}_n ; A) = \left\vert \left\{ O \in \mathcal{O}(\mathbb{Z}_n, A) : \vert O \vert = \frac{n}{d} \right\} \right\vert. \]
This number may be determined by ``counting necklaces'', and we shall discuss how to do this in the next section (see Lemma \ref{le:particular}).
\end{example}

\begin{example} \label{ex:klein}
Let $G = \mathbb{Z}_2 \times \mathbb{Z}_2$ be the Klein four-group and $A= \{ 0, 1\}$. As $G$ is abelian, $[H] = \{ H\}$, for all $H \leq G$. The subgroups of $G$ are 
\[ H_1 = G, \ H_2 = \langle (1,0) \rangle, \ H_3 = \langle (0,1) \rangle, \ H_4 = \langle (1,1) \rangle, \ \text{and} \ H_5 = \langle (0,0) \rangle, \]
where $\langle (a,b) \rangle$ denotes the subgroup generated by $(a,b) \in G$. Any configuration $x : G \to A$ may be written as a $2 \times 2$ matrix $(x_{i,j})$ where $x_{i,j} := (i-1,j-1)x$, $i,j \in \{1,2 \}$. The $G$-orbits on $A^G$ are
\begin{align*}
& O_1  := \left\{  \left( \begin{tabular}{cc}
$0$ \ & \ $0$ \\
 $0$ \  & \ $0$
 \end{tabular} \right) \right\}, \  \ O_2 := \left\{ \left( \begin{tabular}{cc}
$1$ \ & \ $1$ \\
 $1$ \  & \ $1$
 \end{tabular} \right) \right\}, \ \ O_3  :=  \left\{  \left( \begin{tabular}{cc}
$1$ \ & \ $0$ \\
 $1$ \  & \ $0$
 \end{tabular} \right), \left( \begin{tabular}{cc}
$0$ \ & \ $1$ \\
 $0$ \  & \ $1$
 \end{tabular} \right) \right\}, \\[.5em]
 & O_4  :=  \left\{ \left( \begin{tabular}{cc}
$1$ \ & \ $1$ \\
 $0$ \  & \ $0$
 \end{tabular} \right), \left( \begin{tabular}{cc}
$0$ \ & \ $0$ \\
 $1$ \  & \ $1$
 \end{tabular} \right)  \right\}, \  \ O_5  :=  \left\{ \left( \begin{tabular}{cc}
$1$ \ & \ $0$ \\
 $0$ \  & \ $1$
 \end{tabular} \right), \left( \begin{tabular}{cc}
$0$ \ & \ $1$ \\
 $1$ \  & \ $0$
 \end{tabular} \right)   \right\} \\[.5em]
& O_6  :=  \left\{ \left( \begin{tabular}{cc}
$1$ \ & \ $0$ \\
 $0$ \  & \ $0$
 \end{tabular} \right), \left( \begin{tabular}{cc}
$0$ \ & \ $1$ \\
 $0$ \  & \ $0$
 \end{tabular} \right), \left( \begin{tabular}{cc}
$0$ \ & \ $0$ \\
 $0$ \  & \ $1$
 \end{tabular} \right), \left( \begin{tabular}{cc}
$0$ \ & \ $0$ \\
 $1$ \  & \ $0$
 \end{tabular} \right) \right\},  \\[.5em] 
& O_7  :=  \left\{ \left( \begin{tabular}{cc}
$0$ \ & \ $1$ \\
 $1$ \  & \ $1$
 \end{tabular} \right), \left( \begin{tabular}{cc}
$1$ \ & \ $0$ \\
 $1$ \  & \ $1$
 \end{tabular} \right), \left( \begin{tabular}{cc}
$1$ \ & \ $1$ \\
 $1$ \  & \ $0$
 \end{tabular} \right), \left( \begin{tabular}{cc}
$1$ \ & \ $1$ \\
 $0$ \  & \ $1$
 \end{tabular} \right) \right\}.
\end{align*}
Hence,
\begin{align*}
& B_{[H_1]}:=O_1 \cup O_2, \ B_{[H_2]}:=O_3, \ B_{[H_3]}:=O_4, \ B_{[H_4]}:=O_5, \ B_{[H_5]}:=O_6 \cup O_7; \\
& \alpha_{[H_i]}(G;A) = 2, \text { for } i \in \{1,5 \}, \text { and } \alpha_{[H_i]}(G;A) = 1, \text{ for } i \in \{ 2,3,4 \}.
\end{align*}
\end{example}

For any $H \leq G$, let $N_G(H) := \{ g \in G : H = g^{-1} H g \} \leq G$ be the \emph{normaliser of $H$ in $G$}. Note that $H$ is always normal in $N_G(H)$. The following result determines the structure of the group $\ICA(G;A)$, and it is a refinement of \cite[Lemma 4]{CRG16a} (c.f. \cite[Theorem 9]{S15} and \cite[Theorem 7.2]{CK15}).

\begin{theorem}[Structure of $\ICA(G;A)$] \label{th:ICA}
Let $G$ be a finite group and $A$ a finite set of size $q \geq 2$. Let $[H_1], \dots, [H_r]$ be the list of all different conjugacy classes of subgroups of $G$. Then,
\[ \ICA(G;A) \cong \prod_{i=1}^r \left( (N_{G}(H_i)/H_i) \wr \Sym_{\alpha_i} \right), \]
where $\alpha_i := \alpha_{[H_i]}(G;A)$.
\end{theorem}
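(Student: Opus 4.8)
The plan is to decompose $\ICA(G;A)$ first across the blocks $B_{[H_i]}$ and then, within each block, across its constituent $G$-orbits, identifying the per-orbit automorphism group with $N_G(H_i)/H_i$ and the permutation of orbits with $\Sym_{\alpha_i}$.

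First I would show that $\ICA(G;A)$ respects the partition $\mathcal{B}=\{B_{[H_i]}\}$. By Lemma \ref{preserve} every $\tau\in\ICA(G;A)$ sends each orbit $xG$ to the orbit $(x)\tau G$, and by Corollary \ref{conjugate} this forces $[G_x]=[G_{(x)\tau}]$ (indeed, $G$-equivariance of a bijection gives $G_{(x)\tau}=G_x$ outright); hence $\tau$ maps each $B_{[H_i]}$ into itself, and being invertible it restricts to a $G$-equivariant permutation of $B_{[H_i]}$. Writing $\ICA(B):=\{\sigma\in\Sym(B):\sigma \text{ is } G\text{-equivariant}\}$ for a subshift $B$, the restriction map $\tau\mapsto(\tau|_{B_{[H_1]}},\dots,\tau|_{B_{[H_r]}})$ is a homomorphism into $\prod_{i}\ICA(B_{[H_i]})$; it is injective because the blocks cover $A^G$ and surjective because independent $G$-equivariant permutations of the blocks glue to a single $G$-equivariant permutation of $A^G$. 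Thus $\ICA(G;A)\cong\prod_{i=1}^r\ICA(B_{[H_i]})$, and it remains to identify each factor with $(N_G(H_i)/H_i)\wr\Sym_{\alpha_i}$.

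Next I would analyse a single orbit. Fix a block $B_{[H]}$; by Theorem \ref{orbit-stabiliser} every orbit $O\subseteq B_{[H]}$ contains a configuration $x_0$ with $G_{x_0}=H$ exactly, so all $\alpha_{[H]}$ orbits in $B_{[H]}$ are mutually isomorphic copies of the transitive $G$-set with point stabiliser $H$, and in particular they all have the same size. For one such orbit, $\ICA(O)=C_{\Sym(O)}(G^O)$ is the automorphism group of a transitive $G$-set, which is the standard group $N_G(H)/H$: a $G$-equivariant permutation $\sigma$ preserves stabilisers, so $(x_0)\sigma=x_0\cdot g$ with $g\in N_G(H)$ determined modulo $H$, and $G$-equivariance translates composition of such $\sigma$ into multiplication in $N_G(H)/H$ (see \cite[Ch.~1]{DM96}). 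This yields $\ICA(O)\cong N_G(H)/H$.

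Finally I would assemble the $\alpha:=\alpha_{[H]}$ orbits $O_1,\dots,O_\alpha$ of $B_{[H]}$ into a wreath product. Since $G$-equivariant maps carry orbits to orbits of equal size, each $\sigma\in\ICA(B_{[H]})$ induces $\phi_\sigma\in\Sym_\alpha$ with $(O_j)\sigma=O_{(j)\phi_\sigma}$. Fixing $G$-set isomorphisms $\theta_j:O_1\to O_j$, the coordinates $c_j:=\theta_j\circ\sigma|_{O_j}\circ\theta_{(j)\phi_\sigma}^{-1}\in\ICA(O_1)\cong N_G(H)/H$, together with $\phi_\sigma$, yield a bijection $\sigma\mapsto((c_1,\dots,c_\alpha);\phi_\sigma)$ onto $(N_G(H)/H)\wr\Sym_\alpha$; a direct check that composition matches the rule $(v;\phi)(w;\psi)=(vw^\phi;\phi\psi)$ shows it is an isomorphism (with the convention $C\wr\Sym_1=C$ when $\alpha=1$). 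Combining the three steps gives the claimed decomposition. The hard part will be this last step: keeping the right-action and right-composition conventions consistent so that the bookkeeping of the maps $\theta_j$, $\sigma|_{O_j}$ and $\phi_\sigma$ reproduces exactly the wreath-product multiplication rather than its opposite; the same care resolves the homomorphism-versus-antihomomorphism choice already latent in the identification $\ICA(O)\cong N_G(H)/H$.
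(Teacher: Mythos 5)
Your proposal is correct and follows essentially the same route as the paper's proof: restrict invertible CA to the blocks $B_{[H_i]}$, identify the $G$-equivariant permutations of each block as a wreath product over its constituent $G$-orbits, and compute the per-orbit group as $N_G(H_i)/H_i$. The only difference is one of self-containedness: where the paper cites \cite[Lemma 2.1(iv)]{AS09} for the wreath-product structure and \cite[Theorem 4.2A(i)]{DM96} for $C_{\Sym(O)}(G^O)\cong N_G(H)/H$ (using the transpositions from Lemma \ref{le-ICA} to get all of $\Sym_{\alpha_i}$), you verify both identifications directly via explicit coordinates and the standard automorphism computation for a transitive $G$-set, and in doing so you are slightly more explicit than the paper about surjectivity onto the full direct product of the block groups.
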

\begin{proof}
Let $B_i := B_{[H_i]}$, and note that all these subshifts are nonempty because of Remark \ref{rk:subgroups}. By Corollary \ref{conjugate}, any $\tau \in \ICA(G;A)$ maps configurations inside $B_i$ to configurations inside $B_i$; hence, $\ICA(G;A)$ is contained in the group 
\[  \prod_{i=1}^r \Sym(B_i) = \Sym(B_1) \times \Sym(B_2) \times \dots \times \Sym(B_r). \]
 For each $1 \leq i \leq r$, fix a $G$-orbit $O_i \subseteq B_i$, and let $\mathcal{O}_i$ be the set of $G$-orbits contained in $B_i$ (so $O_i \in \mathcal{O}_i$). Note that $\mathcal{O}_i$ is a uniform partition of $B_i$ (i.e. all the blocks in the partition have the same size). For any $\tau \in \ICA(G;A)$, Lemma \ref{preserve} implies that the projection of $\tau$ to $\Sym(B_i)$ is contained in the group that preserves this uniform partition, i.e. the projection of $\tau$ is contained in
\[ S(B_i, \mathcal{O}_i ) := \{ \phi \in \Sym(B_i) : \forall P \in \mathcal{O}_i, \  (P)\phi \in \mathcal{O}_i \}.  \]
By \cite[Lemma 2.1(iv)]{AS09}, 
\[ S(B_i , \mathcal{O}_i ) \cong  \Sym(O_i) \wr \Sym_{\alpha_i}.    \]
It is well-known that $\Sym_{\alpha_i}$ is generated by its transpositions. As the invertible cellular automaton constructed in the proof of Lemma \ref{le-ICA} induces a transposition $(xG,yG) \in \Sym_{\alpha_i}$, with $xG, yG \in \mathcal{O}_i$, we deduce that $\Sym_{\alpha_i} \leq \ICA(G;A)$. Now it is clear that the projection of $\ICA(G;A)$ to $\Sym(B_i)$ is exactly $\ICA(O_i) \wr \Sym_{\alpha_i}$.  By \cite[Theorem 4.2A (i)]{DM96}, it follows that
\[  C_{\Sym(O_i)}(G^{O_i})  \cong N_{G}(H_i) / H_i. \]
The result follows because $\ICA(O_i) \cong C_{\Sym(O_i)}(G^{O_i})$.   
\end{proof}

\begin{corollary} \label{cor:structure}
Let $G$ be a finite Dedekind group and $A$ a finite set of size $q\geq 2$. Let $H_1, \dots, H_r$ be the list of different subgroups of $G$. Then,
\[ \ICA(G;A) \cong \prod_{i=1}^r \left( (G/H_i) \wr \Sym_{\alpha_i} \right), \]
where $\alpha_i := \alpha_{[H_i]}(G;A)$.
\end{corollary}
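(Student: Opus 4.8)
The plan is to derive this directly from Theorem \ref{th:ICA} by specialising the normaliser quotient and the indexing over conjugacy classes. First I would recall that $G$ being Dedekind means every subgroup is normal, so that $g^{-1} H_i g = H_i$ for all $g \in G$ and all $i$. This has two immediate consequences that I would record up front. On the one hand, the conjugacy class of each subgroup is a singleton, $[H_i] = \{ H_i \}$; hence distinct subgroups lie in distinct conjugacy classes, and the list $[H_1], \dots, [H_r]$ of conjugacy classes appearing in Theorem \ref{th:ICA} is in bijection with the list $H_1, \dots, H_r$ of all subgroups of $G$. In particular the integer $r$ and the multiplicities $\alpha_i = \alpha_{[H_i]}(G;A)$ are unchanged, so the index set of the product is exactly as stated.

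On the other hand, for each $i$ the normaliser satisfies $N_G(H_i) = \{ g \in G : H_i = g^{-1} H_i g \} = G$, precisely because $H_i$ is normal. Substituting $N_G(H_i) = G$ into the factor $(N_G(H_i)/H_i) \wr \Sym_{\alpha_i}$ of Theorem \ref{th:ICA} turns it into $(G/H_i) \wr \Sym_{\alpha_i}$. Taking the direct product over $i = 1, \dots, r$ then yields the claimed isomorphism, and no further argument is required.

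There is essentially no obstacle here: the result is a clean specialisation, and the only points worth stating carefully are the two reductions above, namely that conjugacy classes collapse to single subgroups and that each normaliser equals all of $G$. I would keep the write-up to a couple of sentences, citing Theorem \ref{th:ICA} together with the definition of a Dedekind group, and perhaps remark that the quotient $G/H_i$ is now a genuine quotient group because each $H_i$ is normal, a fact that is in any case implicitly needed for $(G/H_i) \wr \Sym_{\alpha_i}$ to be well-defined as a group.
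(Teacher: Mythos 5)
Your proposal is correct and follows exactly the paper's own argument: both derive the result from Theorem \ref{th:ICA} by observing that normality of every subgroup gives $[H_i] = \{H_i\}$ (so conjugacy classes of subgroups are in bijection with subgroups) and $N_G(H_i) = G$, whence each factor $(N_G(H_i)/H_i) \wr \Sym_{\alpha_i}$ becomes $(G/H_i) \wr \Sym_{\alpha_i}$. Your write-up merely spells out these two reductions in slightly more detail than the paper does.
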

\begin{proof}
As every subgroup of $G$ is normal, the results follows because $[H_i] = \{ H_i \}$ and $N_G(H_i) = G$, for all $1 \leq i \leq r$. 
\end{proof}

\begin{example}
For any $n \geq 2$,
\[ \ICA(\mathbb{Z}_n; A) \cong \prod_{i=1}^r \left( \mathbb{Z}_{d_i} \wr \Sym_{\alpha_i} \right), \]
where $d_1, d_2, \dots, d_r$ are the divisors of $n$, and $\alpha_i : = \alpha_{\mathbb{Z}_{n/d_i}}(\mathbb{Z}_n;A)$.
\end{example}

\begin{example} \label{ex:ICA-klein}
Let $G = \mathbb{Z}_2 \times \mathbb{Z}_2$ and $A= \{ 0, 1\}$. By Example \ref{ex:klein}, 
\[ \ICA(G; A ) \cong (\mathbb{Z}_2)^4 \times (G \wr \Sym_2). \]
\end{example}


\section{Aperiodic Configurations} \label{aperiodic}

In this section, we shall determine the integers $\alpha_{[H]}(G;A)$, for each $H \leq G$, as they play an important role in the decomposition of $\ICA(G;A)$ given by Theorem \ref{th:ICA}. 

The following elementary result links the integers $\alpha_{[H]}(G;A)$ with the sizes of the subshifts $B_{[H]}(G;A)$.

\begin{lemma}
Let $G$ be a finite group of order $n \geq 2$ and $A$ a finite set of size $q \geq 2$. Let $H$ be a subgroup of $G$ of order $m$. Then,
\[ \alpha_{[H]} \cdot n =  m \cdot \vert B_{[H]} \vert . \] 
\end{lemma}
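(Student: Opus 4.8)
The plan is a direct double count of $B_{[H]}$ through the $G$-orbits it contains. First I would recall from the construction in Section \ref{structure} that $B_{[H]}$ is a subshift, hence a disjoint union of $G$-orbits, and that by the very definition of $\alpha_{[H]}$ exactly $\alpha_{[H]}$ of the orbits in $\mathcal{O}(G;A)$ lie inside $B_{[H]}$. Since $\mathcal{O}(G;A)$ partitions $A^G$, these $\alpha_{[H]}$ orbits partition $B_{[H]}$, so that $\vert B_{[H]} \vert = \sum_{O} \vert O \vert$, where the sum runs over the $G$-orbits $O \subseteq B_{[H]}$.

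Next I would compute the size of each such orbit. Fixing an orbit $O \subseteq B_{[H]}$ and choosing any $x \in O$, the definition of $B_{[H]}$ gives $[G_x] = [H]$, so $G_x$ is conjugate to $H$ and therefore $\vert G_x \vert = \vert H \vert = m$. The Orbit-Stabiliser Theorem (Theorem \ref{orbit-stabiliser}) then yields $\vert O \vert = n / \vert G_x \vert = n/m$. In particular every orbit inside $B_{[H]}$ has the same size $n/m$. Substituting this uniform value into the partition sum gives $\vert B_{[H]} \vert = \alpha_{[H]} \cdot (n/m)$, and clearing the denominator produces the claimed identity $\alpha_{[H]} \cdot n = m \cdot \vert B_{[H]} \vert$.

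I expect no genuine obstacle here; the only point needing care is the uniformity of the orbit sizes, which rests on the elementary fact that conjugate subgroups have equal order, combined with $\vert G_x \vert$ depending only on the conjugacy class $[G_x]$. This uniformity was already noted when $B_{[H]}$ was introduced, so the argument reduces to assembling the Orbit-Stabiliser count with the partition of $B_{[H]}$ into its $\alpha_{[H]}$ equal-sized orbits.
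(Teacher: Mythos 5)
Your proof is correct and follows essentially the same route as the paper's: partition $B_{[H]}$ into its $\alpha_{[H]}$ constituent $G$-orbits and apply the Orbit-Stabiliser Theorem to see each orbit has size $n/m$. Your added justification that $\vert G_x \vert = m$ because conjugate subgroups have equal order is a detail the paper leaves implicit, but it is the same argument.
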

\begin{proof}
The set of $G$-orbits contained in $B_{[H]}$ forms a partition of $B_{[H]}$ into $\alpha_{[H]}$ blocks. The result follows as each one of these $G$-orbits has size $\frac{n}{m}$ by Theorem \ref{orbit-stabiliser}.
\end{proof}

Denote by $[G:H]$ the index of $H$ in $G$ (i.e. the number of cosets of $H$ in $G$). It is well-known that when $G$ is finite, $[G:H] = \frac{\vert G \vert}{\vert H \vert}$. The following result characterises the situations when $B_{H}$ contains a unique $G$-orbit and will be useful in Section \ref{relative-rank}. 

\begin{lemma} \label{alpha1}
Let $G$ be a finite group and $A$ a finite set of size $q\geq 2$. Then, $\alpha_{[H]} (G;A) = 1$ if and only if $[G:H] =2$ and $q = 2$.
\end{lemma}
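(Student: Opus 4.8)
The plan is to analyse directly which configurations have stabiliser equal to a fixed representative $H$ of the class $[H]$, and to separate orbits using the fact that the $G$-action merely permutes coordinates of a configuration, so for each $a\in A$ the number of $g$ with $(g)x=a$ is constant along each orbit $xG$. The basic building block I would establish first is this: for any $a\in A\setminus\{0\}$, the configuration $x_a$ given by $(g)x_a=a$ for $g\in H$ and $(g)x_a=0$ otherwise satisfies $G_{x_a}=H$. Indeed $x_a$ is constant on the left cosets of $H$, which forces $H\le G_{x_a}$, while evaluating on the coset containing $e$ gives $G_{x_a}\subseteq x_a^{-1}(a)=H$. This supplies, with essentially no work, configurations whose stabiliser is exactly $H$, and it is the engine for manufacturing several distinct orbits inside $B_{[H]}$ when the hypotheses fail.

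For the \emph{if} direction I would assume $[G:H]=2$ and $q=2$. Then $H$ is normal, so $[H]=\{H\}$ and $B_{[H]}=\{x:G_x=H\}$. A configuration satisfies $G_x\supseteq H$ exactly when it is constant on each of the two left cosets $H$ and $G\setminus H$, giving $q^2=4$ such configurations; the $q=2$ constant ones have stabiliser $G$, and the remaining two are non-constant, hence have stabiliser exactly $H$ because no subgroup lies strictly between $H$ and $G$. By the Orbit--Stabiliser Theorem each of these two configurations lies in an orbit of size $[G:H]=2$, so the two-element set $B_{[H]}$ is a single $G$-orbit and $\alpha_{[H]}=1$.

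For the \emph{only if} direction, the harder one, I would argue contrapositively in two steps, each time producing two configurations with stabiliser $H$ in different orbits, forcing $\alpha_{[H]}\ge 2$. First, if $q\ge 3$, I compare $x_1$ and $x_2$ from the building block: both have stabiliser $H$, but the value $2$ occurs in $x_2$ and hence in every member of its orbit, while it never occurs in $x_1G$, so $x_1G\neq x_2G$. This rules out $q\ge 3$, leaving $q=2$. Second, with $q=2$, I compare $x_1$ (value $1$ on $H$) with its complement $y$ (value $0$ on $H$, value $1$ off $H$); the complement also has stabiliser $H$ by the same coset argument, and the orbit-invariant number of $1$'s equals $|H|$ for $x_1$ but $|G|-|H|$ for $y$, so these agree only when $|G|=2|H|$, i.e. $[G:H]=2$. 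Hence $[G:H]\neq 2$ again yields two distinct orbits. The only genuine obstacle is pinning down that each constructed configuration has stabiliser \emph{exactly} $H$ and choosing an orbit invariant sharp enough to separate the pair in every failing case; the symbol-count invariant does both uniformly, so once the building block is in place the remainder is routine.
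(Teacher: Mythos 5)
Your proposal is correct and takes essentially the same approach as the paper: the same indicator-type configurations (value $a$ on $H$ and $0$ elsewhere, together with the complement of $x_1$), the same symbol-count orbit invariants, and the same case split between $q\geq 3$ and $[G:H]\neq 2$. The only cosmetic differences are that you settle the \emph{if} direction by counting $\vert B_{[H]}\vert = 2$ and invoking the Orbit--Stabiliser Theorem instead of exhibiting the element $s$ carrying one non-constant configuration to the other, and that your complement argument absorbs the degenerate case $H=G$, which the paper disposes of separately.
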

\begin{proof}
Suppose first that $[G:H] = q = 2$. The subgroup $H$ is normal because it has index $2$, so $Hg = gH$ for every $g \in G$. Fix $s \in G \setminus H$, and define $x \in A^G$ by
\[ (g)x = \begin{cases}
0 & \text{if } g \in H \\
1 & \text{if } g \in sH = Hs.
\end{cases} \]
Clearly, $G_x = H$ and $x \in B_{[H]}$. Let $y \in B_{[H]}$. As $H$ is normal, $[H] = \{ H\}$, so $G_y = H$. For any $h \in H$,
\[ (h)y = (e)y \cdot h^{-1} = (e)y \text{ and } (sh) y = (s) y \cdot h^{-1} = (s)y,  \]
so $y$ is constant on the cosets of $H$. Therefore, either $y = x$, or
\[ (g)y = \begin{cases}
1 & \text{if } g \in H \\
0 & \text{if } g \in sH = Hs.
\end{cases} \]     
In the latter case, $y \cdot s = x$ and $y \in xG$. This shows that there is a unique $G$-orbit contained in $B_{[H]}$, so $\alpha_{[H]}(G;A) = 1$. 

Conversely, suppose that $[G:H] \neq 2$ or $q \geq 3$. If $[G:H]=1$, then $G=H$ and $\alpha_{[H]}(G;A) = q \geq 2$. Now we prove the two cases separately.
\begin{description}
\item[Case 1:] $[G:H] \geq 3$. Define configurations $x_1, x_2 \in A^G$ by
\begin{align*}
(g)x_1 &= \begin{cases} 
1 & \text{if } g \in H, \\
0 & \text{otherwise}.
\end{cases} \\
(g)x_2 & = \begin{cases} 
0 & \text{if } g \in H, \\
1 & \text{otherwise}.
\end{cases}
\end{align*}
It is clear that $x_1, x_2 \in B_{[H]}$ because $G_{x_1} = G_{x_2} = H$. Furthermore, $x_1$ and $x_2$ are not in the same $G$-orbit because the number of preimages of $1$ under $x_1$ and $x_2$ is different (as $[G:H]\geq 3$). Hence, $\alpha_{[H]}(G;A) \geq 2$. 

\item[Case 2:] $q \geq 3$. Define a configuration $x_3 \in B_{[H]}$ by
\[ (g)x_3 = \begin{cases} 
2 & \text{if } g \in H, \\
0 & \text{otherwise},
\end{cases} \]
and consider the configuration $x_1 \in B_{[H]}$ defined in Case 1. Clearly, $x_1$ and $x_3$ are not in the same $G$-orbit because $2 \in A$ is not in the image of $x_1$. Hence, $\alpha_{[H]}(G;A) \geq 2$. 
\end{description}
\end{proof}

For any $H \leq G$, consider the set of \emph{$H$-periodic configurations}:
\[ \Fix(H) := \{ x \in A^G : x \cdot h = x, \forall h \in H \} = \{ x \in A^G : H \leq G_x \}. \] 
By \cite[Corollary 1.3.4.]{CSC10}, we have
\[ \vert \Fix(H) \vert  = q^{[G:H]} . \]

By the Cauchy-Frobenius lemma (\cite[Theorem 1.7A]{DM96}), the total number of $G$-orbits on $A^G$ is
\[ \vert \mathcal{O}(G;A) \vert = \frac{1}{\vert G \vert} \sum_{g \in G} q^{n/ \vert  g \vert},\]
where $\vert  g \vert := \vert \langle g \rangle \vert$ is the \emph{order of the element $g$}. However, we need a somewhat more sophisticated machinery in order to count the number of orbits inside $B_{[H]}(G;A)$.

The \emph{M\"obius function} of a finite poset $(P, \leq)$ is a map $\mu : P \times P \to \mathbb{Z}$ defined inductively by the following equations:
\begin{align*}
\mu(a,a) & = 1, \ \ \forall a \in P, \\
\mu(a,b) &= 0, \ \ \forall a > b, \\
\sum_{a \leq c \leq b} \mu(a,c) &= 0, \ \ \forall a < b.
\end{align*}

 If $L(G)$ is the set of subgroups of $G$, the poset $(L(G), \subseteq)$ is called the \emph{subgroup lattice} of $G$. Let $\mu : L(G) \times L(G) \to \mathbb{Z}$ be the M\"obius function of the subgroup lattice of $G$. In particular, $\mu(H,H) = 1$ for any $H \leq G$, and $\mu(H,K) = -1$ if $H$ is a maximal subgroup of $K \leq G$.

For any $H \le G$ of order $m$, let $p_H$ be the smallest order of a subgroup between $H$ and $G$ (note that $p_H \ge 2 m$):
\[  p_H := \min\{ |K| : H < K \le G \}, \] 
and define 
\[ 	S_H :=  \{ K : H < K \le G, |K| = p_H \}. \]
By convention, $p_G = n$ and $S_G = \emptyset$.

In the next result we use the following asymptotic notation: write $f(n) = o(g(n))$, with $g(n)$ not identically zero, if $\lim_{n \to 0} \frac{f(n)}{g(n)} =0$.

\begin{theorem}\label{le:sizes}
Let $G$ be a finite group of order $n \geq 2$ and $A$ a finite set of size $q \geq 2$. Let $H$ be a subgroup of $G$ of order $m$.
\begin{description}
\item[\textbf{(i)}]
\[ \vert B_{[H]} \vert = \vert [H] \vert  \sum_{K \leq G} \mu(H,K) \cdot q^{n / \vert K \vert }. \]

\item[\textbf{(ii)}] Using asymptotic notation by fixing $G$ and considering $q = |A|$ as a variable, we have
\[ \frac{|B_{[H]}|}{|[H]|}  = q^{n/m} - (|S_H| + o(1)) q^{n/p_H}. \]
\end{description}
\end{theorem}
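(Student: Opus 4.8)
The plan is to count configurations by their \emph{exact} stabiliser and then invert over the subgroup lattice. For each subgroup $K \le G$, set $F_K := \{ x \in A^G : G_x = K \}$, the set of configurations whose stabiliser is exactly $K$. Since $x \in \Fix(H)$ means precisely $H \le G_x$, and every such $x$ has $G_x = K$ for a unique $K$ with $H \le K \le G$, the set $\Fix(H)$ decomposes as the disjoint union $\Fix(H) = \bigsqcup_{H \le K \le G} F_K$. Recalling $|\Fix(H)| = q^{[G:H]} = q^{n/m}$ from the cited consequence of \cite[Corollary 1.3.4]{CSC10}, this gives the identity $q^{n/|H|} = \sum_{K \ge H} |F_K|$ for every $H \le G$.

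First I would apply M\"obius inversion over the subgroup lattice to this identity. Because $\mu(H,K) = 0$ whenever $H \not\le K$, inverting the relation $|\Fix(H)| = \sum_{K \ge H} |F_K|$ (in the upward direction on the lattice) yields $|F_H| = \sum_{K \le G} \mu(H,K)\, q^{n/|K|}$. To pass from $F_H$ to $B_{[H]}$, observe that $B_{[H]} = \bigsqcup_{H' \in [H]} F_{H'}$, and that for conjugate subgroups the sets $F_{H'}$ are equinumerous: if $H' = g^{-1} H g$ then, by Theorem \ref{orbit-stabiliser}, the bijection $x \mapsto x \cdot g$ carries $F_H$ onto $F_{H'}$, so $|F_{H'}| = |F_H|$. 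Hence $|B_{[H]}| = |[H]| \cdot |F_H|$, which is exactly statement \textbf{(i)}.

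For \textbf{(ii)} I would read off the asymptotics of $|F_H| = \sum_{K \ge H} \mu(H,K)\, q^{n/|K|}$ as $q \to \infty$ with $G$ fixed. The unique term of largest exponent is $K = H$, contributing $\mu(H,H)\, q^{n/m} = q^{n/m}$. Among the remaining terms (those with $K > H$) the largest exponent is $n/p_H$, attained exactly by the subgroups $K \in S_H$. For each such $K$, minimality of $p_H$ forces $H$ to be a maximal subgroup of $K$ (any intermediate $H < L < K$ would have $|L| < p_H$, contradicting the definition of $p_H$), so $\mu(H,K) = -1$; these terms together contribute $-|S_H|\, q^{n/p_H}$. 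Every other term has $|K| > p_H$, hence exponent strictly less than $n/p_H$, and so their total is $o(q^{n/p_H})$. Collecting, $|F_H| = q^{n/m} - |S_H|\, q^{n/p_H} + o(q^{n/p_H}) = q^{n/m} - (|S_H| + o(1))\, q^{n/p_H}$, as claimed, the case $H = G$ being the degenerate one where $S_G = \emptyset$, $p_G = n$, and there are no lower-order terms.

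The main obstacle is organisational rather than computational: one must set up the M\"obius inversion in the correct upward direction on $(L(G), \subseteq)$ and justify that $\mu(H,K) = -1$ precisely for the covering relations arising from $S_H$. Once these two points are pinned down, the conjugation-invariance step giving $|B_{[H]}| = |[H]|\,|F_H|$ and the term-by-term asymptotic estimate are routine.
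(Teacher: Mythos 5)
Your proposal is correct and follows essentially the same route as the paper: both partition $\Fix(H)$ by exact stabiliser (the paper writes $\vert\Fix(H)\vert = \sum_{H \le K \le G} \vert B_{[K]}\vert / \vert [K]\vert$, which is exactly your $\sum_{K \ge H} \vert F_K \vert$ after the conjugation bijection), apply M\"obius inversion over $(L(G),\subseteq)$, and then extract the asymptotics by isolating the $K=H$ term and the $K \in S_H$ terms with $\mu(H,K)=-1$. Your write-up merely makes explicit two points the paper leaves implicit, namely the equinumerosity of $F_{H'}$ across a conjugacy class and the fact that minimality of $p_H$ forces $H$ to be maximal in each $K \in S_H$.
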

\begin{proof}
In order to prove part \textbf{(i)}, observe that
\begin{align*}
\vert \Fix(H) \vert = \sum_{H \leq K \leq G} \frac{1}{\vert [K] \vert} \vert B_{[K]} \vert.
\end{align*}
By M\"obius inversion (see \cite[4.1.2]{Ke99}), we obtain that
\[ \vert B_{[H]} \vert = \vert [H] \vert \sum_{K \leq G} \mu(H,K) \cdot \vert \Fix(K) \vert.  \]
The result follows as $\vert \Fix(K) \vert = q^{n / \vert K \vert}$. 

Now we prove part \textbf{(ii)}. The result is clear for $H = G$. Otherwise, we have
\begin{align*}
	\frac{|B_{[H]}|}{|[H]|} &= \sum_{H \le K \le G} \mu(H,K) q^{n/|K|}\\
	&= q^{n/m} - |S_H| q^{n/p_H} + \sum_{K \notin S_H} \mu(H,K) q^{n/|K|}\\
	&= q^{n/m} - (|S_H| - \sum_{K \notin S_H} \mu(H,K) q^{n/|K| - n/p_H} ) q^{n/p_H}.
\end{align*}
The result follows as $q^{n/|K| - n/p_H} = o(1)$. 
\end{proof}

As the M\"obius function of the subgroup lattice of $G$ is not easy to calculate in general, we shall give a few more explicit results by counting the number of so-called \emph{aperiodic configurations}:
\[ \ac(G;A) := \vert \{ x \in A^G : G_x = \{ e \} \} \vert = \vert B_{[\{ e \}]} (G;A) \vert.\]

Part of our motivation to study this number is that, when $H$ is a normal subgroup of $G$, the size of the subshift $B_{[H]}$ is equal to the number of aperiodic configurations with respect to $G/H$.

\begin{lemma}
Let $G$ be a finite group, $A$ a finite set, and $H$ a normal subgroup of $G$. Then,
\[  \vert B_{[H]}(G;A) \vert = \ac( G/H ; A) . \]
\end{lemma}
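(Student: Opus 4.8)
The plan is to exhibit an explicit bijection between $B_{[H]}(G;A)$ and the set of aperiodic configurations for the quotient group $G/H$ acting on $A^{G/H}$. First I would record that, since $H$ is normal, $[H] = \{H\}$, so that $B_{[H]}(G;A) = \{x \in A^G : G_x = H\}$. I would then unwind the definition of $\Fix(H)$: a configuration $x$ satisfies $x \cdot h = x$ for all $h \in H$ precisely when $(g h^{-1})x = (g)x$ for all $g \in G$ and $h \in H$, i.e. when $x$ is constant on each left coset $gH$. Because $H$ is normal these coincide with the right cosets, and $G/H$ is a group; thus assigning to each $x \in \Fix(H)$ the well-defined function $\bar{x} : G/H \to A$, $(gH)\bar{x} := (g)x$, yields a bijection $\Phi : \Fix(H) \to A^{G/H}$.

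The key step is to verify that $\Phi$ intertwines the relevant group actions. Since $H$ fixes every element of $\Fix(H)$, the group $G$ acts on $\Fix(H)$ through the quotient $G/H$, and I would check directly from \eqref{action} that $\overline{x \cdot a} = \bar{x} \cdot (aH)$ for all $a \in G$, where the right-hand action is the standard action of $G/H$ on $A^{G/H}$ given by $(gH)(\bar{x} \cdot aH) = (g a^{-1} H)\bar{x}$. Concretely, both $\overline{x \cdot a}$ and $\bar{x} \cdot (aH)$ send $gH$ to $(g a^{-1})x = (g a^{-1} H)\bar{x}$. This computation is exactly where the inverse appearing in \eqref{action} and the normality of $H$ must interact, so it is the place where I expect the coset bookkeeping to be most delicate; everything else is formal.

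From this equivariance I would deduce the stabiliser correspondence: for $x \in \Fix(H)$, an element $a \in G$ lies in $G_x$ if and only if $x \cdot a = x$, equivalently $\bar{x} \cdot (aH) = \bar{x}$, i.e. $aH \in (G/H)_{\bar{x}}$. Hence $G_x / H = (G/H)_{\bar{x}}$, and in particular $G_x = H$ if and only if $(G/H)_{\bar{x}}$ is trivial. Therefore $\Phi$ restricts to a bijection between $\{x \in A^G : G_x = H\} = B_{[H]}(G;A)$ and $\{y \in A^{G/H} : (G/H)_y = \{eH\}\}$, the latter being by definition the set whose cardinality is $\ac(G/H;A)$. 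Comparing cardinalities gives $\vert B_{[H]}(G;A)\vert = \ac(G/H;A)$, as required.
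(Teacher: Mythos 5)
Your proposal is correct and takes essentially the same approach as the paper: where the paper simply cites \cite[Proposition 1.3.7]{CSC10} for the existence of a $G/H$-equivariant bijection between $A^{G/H}$ and $\Fix(H)$ and then matches stabilisers, you construct that bijection explicitly and verify the equivariance and the stabiliser correspondence $G_x/H = (G/H)_{\bar{x}}$ by hand. The extra details you supply (well-definedness on cosets, the role of normality in the coset computation) are exactly the content of the cited proposition, so the two proofs coincide in substance.
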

\begin{proof}
As $H$ is normal, then $G/H$ is a group. By \cite[Proposition 1.3.7.]{CSC10}, there is a $G/H$-equivariant bijection between the configuration space $A^{G/H}$ and $\Fix(H)$. Hence, configurations in $A^{G/H}$ with trivial stabiliser correspond to configurations in $\Fix(H)$ with stabiliser $H$.  
\end{proof}

The following result gives some formulae for the number of aperiodic configurations of various finite groups. 

\begin{lemma}\label{le:particular}
Let $G$ be a finite group of order $n \geq 2$ and $A$ a finite set of size $q \geq 2$.
\begin{description} 
\item[\textbf{(i)}]
\[ \ac(G;A) = \sum_{K \leq G} \mu(\{e \} , K) \cdot q^{n / \vert K \vert }. \]

\item[\textbf{(ii)}] If $G \cong \mathbb{Z}_n$ is a cyclic group, then
\[ \ac(\mathbb{Z}_n, A) =  \sum_{d \mid n} \mu(1,d) \cdot q^{n/d}, \]
where $\mu$ is the classic M\"obius function of the poset $(\mathbb{N}, \mid)$.

\item[\textbf{(iii)}] If $G$ is a $p$-group and $\mathcal{H} := \{ H \leq G : H \text{ is elementary abelian} \}$, then
\[ \ac(G;A) = \sum_{H \in \mathcal{H}} (-1)^{\log_p \vert H \vert} p ^{\binom{\log_p \vert H \vert}{2}} q ^{n/\vert H \vert}  . \]

\item[\textbf{(iv)}] If $G \cong \mathbb{Z}_p^m$ is an elementary abelian group, then 
\[ \ac(\mathbb{Z}_p^m ; A) = q^{p^m} + \sum_{r=1}^{m} (-1)^r q^{p^{m-r}}  p^{r(r-1)/2} \binom{m}{r}_p .  \]
where $\binom{m}{r}_p$ is the Gaussian binomial coefficient:
\[ \binom{m}{r}_p := \frac{(1 - p^m) (1 - p^{m-1}) \dots (1 - p^{m-r+1})}{(1 - p^r)(1 - p^{r-1}) \dots (1 - p )}. \]
\end{description} 
\end{lemma}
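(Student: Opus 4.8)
The plan is to prove the four formulae by specialising the general M\"obius-inversion identity and then carrying out the group-theoretic bookkeeping specific to each family. Part \textbf{(i)} is immediate: it is exactly Theorem \ref{le:sizes}\textbf{(i)} applied to the trivial subgroup $H = \{e\}$, for which $|[\{e\}]| = 1$ and $\ac(G;A) = |B_{[\{e\}]}|$, so I would simply cite that theorem and substitute. This fixes the template that all remaining parts instantiate, namely $\ac(G;A) = \sum_{K \le G} \mu(\{e\},K)\, q^{n/|K|}$.

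For part \textbf{(ii)}, I would use that the subgroup lattice of $\mathbb{Z}_n$ is isomorphic, via $\mathbb{Z}_d \mapsto d$, to the divisor lattice of $n$ ordered by divisibility; since there is a unique subgroup of each order $d \mid n$ and $|\mathbb{Z}_d| = d$, the M\"obius function $\mu(\{e\},K)$ of the subgroup lattice coincides with the classical number-theoretic $\mu(1,d) = \mu(d)$ on the divisor lattice. Substituting $|K| = d$ into part \textbf{(i)} gives the stated sum directly. The only point requiring a sentence of justification is this isomorphism of posets and the resulting identification of the two M\"obius functions.

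For part \textbf{(iii)}, the key input is the classical value of the M\"obius function of the subgroup lattice of a $p$-group: a theorem (essentially due to Weisner/Hall) states that $\mu(\{e\}, K)$ vanishes unless $K$ is \emph{elementary abelian}, and that for an elementary abelian $K$ with $|K| = p^k$ one has $\mu(\{e\},K) = (-1)^k p^{\binom{k}{2}}$. I would state this result with a citation (Hall's formula for $\mu$ in a $p$-group), restrict the sum in part \textbf{(i)} to $K \in \mathcal{H}$, write $k = \log_p|K|$, and read off the formula. The anticipated main obstacle is exactly here: one must invoke Hall's explicit computation of the M\"obius function of a $p$-group rather than re-deriving it, so the proof hinges on quoting that result correctly and matching the sign and power-of-$p$ conventions to those in the statement.

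Part \textbf{(iv)} is the concrete specialisation of \textbf{(iii)} to $G \cong \mathbb{Z}_p^m$, where \emph{every} subgroup is elementary abelian, so $\mathcal{H} = L(G)$ and the sum runs over all subgroups. The step I would carry out is to group the subgroups by their order $p^{m-r}$ (equivalently by codimension/dimension as $\mathbb{F}_p$-subspaces) and count them: the number of subspaces of dimension $m-r$ in an $m$-dimensional $\mathbb{F}_p$-space is the Gaussian binomial coefficient $\binom{m}{m-r}_p = \binom{m}{r}_p$. Setting $k = m - r$ in the summand of \textbf{(iii)}, so that the sign is $(-1)^{m-r}$ and the power of $p$ is $p^{\binom{m-r}{2}}$, and then multiplying by the count $\binom{m}{r}_p$, I would need to reconcile the resulting expression with the stated $(-1)^r p^{r(r-1)/2}$. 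This reindexing to match the paper's chosen form is the one genuinely computational step, and verifying that the exponents and signs line up (possibly after re-indexing $r \leftrightarrow m-r$ or absorbing a global sign) is where I would be most careful; the underlying subspace count via Gaussian binomials is standard and would be cited rather than proved.
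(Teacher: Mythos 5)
Your proposal is correct and follows essentially the same route as the paper: part \textbf{(i)} is Theorem \ref{le:sizes}\textbf{(i)} applied at $H=\{e\}$ with $|[\{e\}]|=1$, part \textbf{(ii)} uses the isomorphism between the subgroup lattice of $\mathbb{Z}_n$ and the divisor lattice of $n$, part \textbf{(iii)} quotes the Hawkes--Isaacs--\"Ozaydin (Hall) formula for the M\"obius function of a $p$-group, and part \textbf{(iv)} counts subgroups via Gaussian binomial coefficients. The only difference is cosmetic: the paper indexes the subgroups of $\mathbb{Z}_p^m$ by order $p^r$, so the summand $(-1)^r p^{r(r-1)/2} q^{p^{m-r}} \binom{m}{r}_p$ matches the stated formula immediately, whereas your choice of order $p^{m-r}$ forces the re-indexing $r \leftrightarrow m-r$ you anticipate --- which does go through, with no global sign to absorb.
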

\begin{proof}
Part \textbf{(i)} follows by Lemma \ref{le:sizes} \textbf{(iii)}. Part \textbf{(ii)} follows because the subgroup lattice of the cyclic group $\mathbb{Z}_n$ is isomorphic to the lattice of divisors of $n$.

We prove part \textbf{(iii)}. If $G$ is a $p$-group, by \cite[Corollary 3.5.]{HIO89}, for any $H \leq G$ we have
\[ \mu \left( \{e \}, H \right) = \begin{cases}
(-1)^{\log_p \vert H \vert} p^{\binom{\log_p \vert H \vert}{2}} & \text{ if $H$ is elementary abelian,} \\
0 & \text{ otherwise}.
\end{cases} \] 
So the result follows by part \textbf{(i)}. 

Finally, we prove part \textbf{(iv)}. Denote by $\mathcal{H}_r$ the set of elementary abelian subgroups of $G$ of order $p^r$. Then,
\begin{align*}
 \ac(G;A) & = \sum_{r=0}^m (-1)^{r}  \vert \mathcal{H}_r \vert p ^{\binom{r}{2}} q ^{p^m/p^r} \\ 
                & = \sum_{r=0}^m (-1)^{r}  q ^{p^{m-r}} p ^{r(r-1)/2}    \vert \mathcal{H}_r \vert.
\end{align*}
The result follows because the Gaussian binomial coefficient $\binom{m}{r}_p$  gives precisely the number of subgroups of order $p^r$ of $\mathbb{Z}_p^m$ (see \cite[Section 9.2]{C94}).  
\end{proof}

As constant configurations are never aperiodic, the following obvious upper bound of $\ac(G;A)$ is obtained:
\[ \ac(G;A) \leq q^n - q \]
This upper bound is achieved if and only if $n$ is a prime number (so $G$ is a cyclic group). The following lower bound bound of $\ac(G;A)$ was obtained by Gao, Jackson and Seward \cite[Corollary 1.7.2.]{GJS16}:
\[  q^n - q^{n-1} \leq \ac(G;A), \]
which is achieved for small values of $n$ and $q$, as for example, when $n=q=2$, or when $G = \mathbb{Z}_2 \times \mathbb{Z}_2$ and $q = 2$ (see Example \ref{ex:klein}).

For any $d \mid n$, define $G^{(d)} : = \{ g \in G : \vert \langle g \rangle \vert = d \}$. In the next result we improve the known estimates of $\ac(G;A)$. 

\begin{theorem}[Lower bound on apperiodic configurations]\label{low-bound}
Let $G$ be a finite group of order $n \geq 2$ and $A$ a finite set of size $q \geq 2$.  Let $p$ be the smallest prime dividing $n$. Then:
\begin{description}
\item[\textbf{(i)}] $\mathrm{ac}(G; A) = q^n - \left(\frac{ \left\vert G^{(p)} \right\vert}{p-1} + o(1) \right) q^{n/ p}$. 

\item[\textbf{(ii)}] We have the following lower bound: 
\[  q^n - (n-1) q^{n/p} \leq \ac(G; A).  \]
\end{description}
\end{theorem}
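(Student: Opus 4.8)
The plan is to derive both parts from results already in hand, but by different techniques: part \textbf{(i)} by extracting the leading terms of the M\"obius formula of Lemma \ref{le:particular}\textbf{(i)}, and part \textbf{(ii)} by a direct union bound which, crucially, gives an inequality valid for \emph{every} $q$ rather than merely asymptotically.

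For part \textbf{(i)}, I would start from
\[ \ac(G;A) = \sum_{K \le G} \mu(\{e\}, K)\, q^{n/\vert K \vert} \]
and separate the contributions by the order of $K$. The term $K = \{e\}$ contributes $q^n$. Since $p$ is the smallest prime dividing $n$, every nontrivial subgroup has order divisible by some prime $\geq p$, so the subgroups of minimal nontrivial order are exactly those of order $p$, and at least one exists by Cauchy's theorem. Each such $K$ is cyclic of prime order, so the interval $[\{e\}, K]$ is a two-element chain and $\mu(\{e\}, K) = -1$. The number of order-$p$ subgroups equals $\vert G^{(p)} \vert/(p-1)$, because each contains exactly $p-1$ elements of order $p$ and two distinct subgroups of prime order intersect trivially; hence these subgroups contribute $-\frac{\vert G^{(p)} \vert}{p-1} q^{n/p}$. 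Every remaining subgroup $K$ satisfies $\vert K \vert > p$, so $n/\vert K \vert < n/p$; dividing the finite remaining sum by $q^{n/p}$ and letting $q \to \infty$ shows it is $o(1)$, which yields the claimed asymptotic.

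For part \textbf{(ii)}, I would avoid the signed M\"obius sum and bound the complementary set directly. A configuration fails to be aperiodic precisely when some nontrivial $g$ fixes it, so the set of non-aperiodic configurations is $\bigcup_{g \in G \setminus \{e\}} \Fix(\langle g \rangle)$, since $x \in \Fix(\langle g \rangle)$ if and only if $g \in G_x$. By the union bound and $\vert \Fix(\langle g \rangle) \vert = q^{n/\vert g \vert}$,
\[ \left\vert \{ x \in A^G : G_x \neq \{e\} \} \right\vert \le \sum_{g \in G \setminus \{e\}} q^{n/\vert g \vert}. \]
Every nontrivial $g$ has order at least $p$ (the smallest nontrivial divisor of $n$), so $q^{n/\vert g \vert} \le q^{n/p}$; as there are $n-1$ nontrivial elements, the right-hand side is at most $(n-1)q^{n/p}$. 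Subtracting from $q^n$ gives the bound.

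The main obstacle is conceptually minor but requires care in part \textbf{(i)}: confirming that order-$p$ subgroups are the unique minimal nontrivial subgroups, computing their number as $\vert G^{(p)} \vert/(p-1)$, and verifying that all higher-order M\"obius terms are genuinely $o(q^{n/p})$ regardless of their signs. Part \textbf{(ii)} is then a clean counting inequality, and it is worth noting that it does not follow from part \textbf{(i)} alone, since the latter is only asymptotic in $q$ whereas the union bound holds for all $q \geq 2$.
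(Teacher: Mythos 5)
Your proposal is correct and takes essentially the same route as the paper: your part \textbf{(i)} is exactly the paper's proof of Theorem \ref{le:sizes}\textbf{(ii)} specialised to $H = \{e\}$ (the minimal nontrivial subgroups are those of order $p$, each with M\"obius value $-1$, counted as $\vert G^{(p)} \vert/(p-1)$, with all larger subgroups absorbed into the $o(1)$ term), and your part \textbf{(ii)} is the same union bound over the fixed-point sets $\Fix(\langle g \rangle)$, the paper merely restricting the union to the prime-order generators of minimal subgroups before bounding their number by $n-1$.
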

\begin{proof}
By Theorem \ref{le:sizes} (ii) with $H =  \{ e \}$, we have
\[ \ac(G;A) = q^{n} - (\vert S_{\{e \}} \vert + o(1) ) q^{n / p_{\{e \}}}. \]
In this case, $p_{\{ e \}}$ is the smallest order of a non-identity element in $G$, which, by Sylow's theorem, is equal to $p$, the smallest prime dividing $n$. Furthermore, $\vert S_{\{ e \}} \vert$, the number of subgroups of $G$ of order $p$, is $\left\vert G^{(p)} \right\vert / (p-1)$, so part (i) follows. 

In order to prove part (ii), let $t_1, \dots, t_r$ be the generators of the minimal subgroups of $G$ (all of which are cyclic groups of prime order). Then, 
\[ 	\ac(G; A) = q^n -  \left\vert \bigcup_{i=1}^r \Fix( \langle t_i \rangle) \right\vert \geq   q^n -   \sum_{i=1}^r  q^{n/ \vert t_i \vert } \geq q^n - (n-1) q^{n/p}. \] 

\end{proof}

The lower bound of Theorem \ref{low-bound} (ii) is much tighter than the one given by \cite[Corollary 1.7.2.]{GJS16}.


\section{Generating Sets of of $\CA(G;A)$} \label{generating}

For a monoid $M$ and a subset $T \subseteq M$, denote by $\langle T \rangle$ the submonoid \emph{generated} by $T$, i.e. the smallest submonoid of $M$ containing $T$. Say that $T$ is a \emph{generating set} of $M$ if $M = \langle T \rangle$; in this case, every element of $M$ is expressible as a word in the elements of $T$ (we use the convention that the empty word is the identity).


\subsection{Memory Sets of Generators of $\CA(G;A)$}

A large part of the classical research on CA is focused on CA with small memory sets. In some cases, such as the elementary Rule 110, or John Conway's Game of Life, these CA are known to be Turing complete. In a striking contrast, when $G$ and $A$ are both finite, CA with small memory sets are insufficient to generate the monoid $\CA(G;A)$.

\begin{theorem}[Minimal memory set of generators of $\CA(G;A)$] \label{minimal-memory}
Let $G$ be a finite group of order $n \geq 2$ and $A$ a finite set of size $q \geq 2$. Let $T$ be a generating set of $\CA(G;A)$. Then, there exists $\tau \in T$ with minimal memory set $S=G$.
\end{theorem}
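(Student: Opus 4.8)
The plan is to show that the submonoid $\mathcal{C} := \langle \{ \tau \in \CA(G;A) : \tau \text{ has memory set } \neq G \} \rangle$ generated by all cellular automata with memory set strictly smaller than $G$ is a \emph{proper} submonoid of $\CA(G;A)$. The invariant I would use is the rank $\rk(\tau) := \vert (A^G)\tau \vert$. Two standard facts about finite transformation monoids drive the argument: $\rk(\sigma\tau) \le \min(\rk(\sigma), \rk(\tau))$, and a composite is bijective if and only if every factor is bijective. Consequently, if I can forbid the value $q^n - 1$ for the rank of every generator, then every product of generators has rank either $q^n$ (all factors bijective) or at most $q^n - 2$ (some factor non-bijective, hence of rank $\le q^n - 2$), so the value $q^n - 1$ is never attained inside $\mathcal{C}$.

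The heart of the proof is therefore the following claim, which I will call the \emph{key lemma}: no cellular automaton with memory set $\neq G$ has rank exactly $q^n - 1$. Granting it, I exhibit an explicit $\tau_0$ of rank $q^n - 1$: fix two distinct constant configurations $\mathbf{a} \neq \mathbf{b}$, set $(\mathbf b)\tau_0 := \mathbf a$ and $(x)\tau_0 := x$ for every other $x \in A^G$. This $\tau_0$ is $G$-equivariant (constants go to constants, everything else is fixed), hence a cellular automaton by Theorem \ref{AG-finite}, and its image is $A^G \setminus \{\mathbf b\}$, so $\rk(\tau_0) = q^n - 1$. By the previous paragraph $\tau_0 \notin \mathcal{C}$, so any generating set $T$ must contain an element outside $\mathcal{C}$, i.e. a $\tau$ whose minimal memory set is $G$.

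For the key lemma I would argue as follows. Since $\tau$ preserves the orbit partition (Lemma \ref{preserve}), its image is a subshift, so the complement $A^G \setminus (A^G)\tau$ is a union of $G$-orbits; if $\rk(\tau) = q^n - 1$ this complement is a single orbit of size $1$, i.e. one constant configuration $\mathbf c$. As constants map to constants (Lemma \ref{constant-config}) and $\mathbf c$ is unreached, the induced self-map of the $q$ constants is non-injective, so there exist distinct constants $\mathbf a \neq \mathbf b$ with $(\mathbf a)\tau = (\mathbf b)\tau =: \mathbf d \in (A^G)\tau$. Now suppose $\tau$ has memory set $S \neq G$ and pick $g_0 \in G \setminus S$. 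Changing a configuration only at $g_0$ alters $(x)\tau$ only at the coordinates $S^{-1} g_0$, a set of size $\vert S \vert < n$; hence the restriction of $(x)\tau$ to $G \setminus S^{-1}g_0$ does not depend on $x_{g_0}$. Partition $A^G$ into the cosets of configurations agreeing off $S^{-1}g_0$ (each of size $q^{\vert S\vert}$). Then $\tau$ is injective on each $g_0$-fibre (two configurations differing only at $g_0$ cannot be the two distinct constants $\mathbf a, \mathbf b$), and the total non-injectivity, equal to $1$, concentrates in exactly one coset $C_0$. The coset containing $\mathbf c$ forces $\vert (A^G)\tau \cap C \vert = q^{\vert S\vert} - 1$ there, so its non-injectivity is $\equiv 1 \pmod q$ and thus positive, giving $C_0 \ni \mathbf c$; likewise the collapse of $\{\mathbf a, \mathbf b\}$ puts $\mathbf d$ in $C_0$. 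But $\mathbf c$ and $\mathbf d$ are constants agreeing on the nonempty set $G \setminus S^{-1}g_0$, so $\mathbf c = \mathbf d$, contradicting $\mathbf c \notin (A^G)\tau$ and $\mathbf d \in (A^G)\tau$.

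The main obstacle is precisely this key lemma; once it is in place the generation argument is a short rank count. The two delicate points are (i) reducing the ``defect'' of an almost-bijective cellular automaton to a single constant configuration via the fact that images are subshifts, and (ii) the divisibility bookkeeping modulo $q$ on the coset partition induced by a non-memory coordinate $g_0$, which is what ultimately forces the missed constant $\mathbf c$ and the collapsed value $\mathbf d$ to coincide. I expect (ii) to be where the real work lies, since it is the step that genuinely uses $S \neq G$.
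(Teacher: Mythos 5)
Your proposal is correct, and its technical core is genuinely different from the paper's proof, even though both arguments use the same witness automaton (the idempotent sending one constant configuration to another, which has rank $q^n-1$) and both reach a contradiction through divisibility by $q$ that exploits $\vert S \vert < n$. The paper uses no rank invariant: it factorises $\sigma = (\mathbf{0} \to \mathbf{1})$ over a hypothetical generating set of small-memory CA, extracts a single factor $\tau_j$ that collapses the constant configurations by exactly one and maps the non-constant ones onto themselves, and then counts the total $k$-weight $w$ of all non-constant configurations in two ways: directly, giving that $w/n$ is an integer not divisible by $q$, and through the local rule via pattern-occurrence counts $N_y$, giving $w/n \equiv 0 \pmod{q}$ whenever $s < n$. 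Your key lemma (no CA admitting a memory set properly contained in $G$ has rank exactly $q^n-1$) replaces this global weight count with a local perturbation argument: for $g_0 \notin S$, the preimage of each part of the partition of the codomain by values off $S^{-1}g_0$ is a union of $g_0$-fibres, hence of size divisible by $q$, and this forces the unique missed configuration $\mathbf{c}$ and the unique doubly-covered configuration $\mathbf{d}$ into the same part; both being constants that agree on the nonempty set $G \setminus S^{-1}g_0$, they coincide, contradicting $\mathbf{c} \notin (A^G)\tau$ and $\mathbf{d} \in (A^G)\tau$. Your reduction to the key lemma is arguably cleaner than the paper's: the inequality $\rk(\sigma\tau) \le \min(\rk(\sigma), \rk(\tau))$, together with the fact that a product of transformations of a finite set is bijective exactly when all factors are, shows at once that rank $q^n-1$ is unattainable in $\mathcal{C}$, whereas the paper must track the sets of constant and non-constant configurations through the factorisation to isolate the factor $\tau_j$. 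Two small points to tighten: \textbf{(i)} exclude separately the degenerate case $S = \emptyset$, where $\tau$ is a constant map of rank $1 \neq q^n - 1$ but your congruence (which needs $q^{\vert S \vert} \equiv 0 \pmod{q}$) breaks down; \textbf{(ii)} your parenthetical claim that $\tau$ is injective on each $g_0$-fibre presupposes that the collapse of $\{\mathbf{a},\mathbf{b}\}$ is the \emph{only} collision of $\tau$ — this does follow from the total deficiency being $1$, but it should be recorded before it is invoked, since it is also what justifies placing $\mathbf{d}$'s part equal to the unique deficient part $C_0$.
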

\begin{proof}
Suppose that $T$ is a generating set of $\CA(G; A)$ such that each of its elements has minimal memory set of size at most $n-1$. Consider the idempotent $\sigma:=(\mathbf{0} \to \mathbf{1}) \in \CA(G; A)$, where $\textbf{0}, \textbf{1} \in A^G$ are different constant configurations. Then, $\sigma = \tau_1 \tau_2 \dots \tau_\ell$, for some $\tau_i \in T$. By the definition of $\sigma$ and Lemma \ref{constant-config}, there must be $1 \leq j \leq \ell$ such that $\left\vert ( A^G_{\text{c}})\tau_j \right\vert = q-1$ and $( A^G_{\text{nc}})\tau_j = A^G_{\text{nc}}$, where 
\[A^G_{\text{c}}:= \{ \mathbf{k} \in A^G : \mathbf{k} \text{ is constant} \} \text{ and } A^G_{\text{nc}} := \{ x \in A^G : x \text{ is non-constant} \}. \]
 Let $S \subseteq G$ and $\mu : A^S \to A$ be the minimal memory set and local function of $\tau := \tau_j$, respectively. By hypothesis, $s := \vert S \vert < n$. Since the restriction of $\tau$ to $A^G_{\text{c}}$ is not a bijection, there exists $\mathbf{k} \in A^G_{\text{c}}$ (defined by $(g)\mathbf{k}:=k \in A$, $\forall g \in G$) such that $\mathbf{k} \not \in ( A^G_{\text{c}})\tau$. 

For any $x \in A^G$, define the $k$-\emph{weight} of $x$ by
\[   \vert x \vert_k := \vert \{ g \in G : (g)x \neq k \} \vert. \] 
Consider the sum of the $k$-weights of all non-constant configurations of $A^G$:
\begin{align*}
 w & := \sum_{x \in A^G_{\text{nc}} } \vert x \vert_k  =  \sum_{x \in A^G} \vert x \vert_k  -  \sum_{x \in A^G_{\text{c}}} \vert x \vert_k   \\
&  =  n(q-1) q^{n-1}  - n(q-1) =  n(q-1) ( q^{n-1} - 1) .  
\end{align*}
In particular, $\frac{w}{n}$ is an integer not divisible by $q$.

For any $x \in A^G$ and $y \in A^S$, define 
\[ \Sub(y, x) := \vert \{ g \in G : y = x \vert_{Sg} \} \vert; \]
this counts the number of times that $y$ appears as a subconfiguration of $x$. Then, for any fixed $y \in A^S$,
\[ N_y  := \sum_{x \in A^G_{\text{nc}}} \Sub(y,x) = \begin{cases}
 n q^{n-s}& \text{if } y \in A^S_{\text{nc}}, \\
 n (q^{n-s} - 1 )& \text{if } y \in A^S_{\text{c}}. 
\end{cases} \]
To see why the previous equality holds, fix $g \in G$ and count the number of configurations $x \in A^G_{\text{nc}}$ such that $y = x \vert_{Sg}$: there are $q^{n-s}$ such configurations if $y$ is non-constant, and $q^{n-s}-1$ if $y$ is constant. The equality follows by counting this for each one of the $n$ elements of $G$.   

Let $\delta : A^2 \to \{0,1 \}$ be the Kronecker's delta function. Since $( A^G_{\text{nc}})\tau = A^G_{\text{nc}}$, we have
\begin{align*}
 w &= \sum_{x \in A^G_{\text{nc}} } \vert (x)\tau \vert_k = \sum_{y \in A^S} N_y  ( 1 - \delta_{(y)\mu, k}  )    \\
& =   n q^{n-s} \sum_{y \in A^S_{\text{nc}}} ( 1 - \delta_{(y)\mu, k} ) +  n ( q^{n-s} - 1 ) \sum_{y \in A^S_{\text{c}}} ( 1 - \delta_{(y)\mu, k} ). 
\end{align*}
Because $\mathbf{k} \not \in ( A^G_{\text{c}})\tau$, we know that $(y)\mu \neq k$ for all $y \in A^S_{\text{c}}$. Therefore,
\[ \frac{w}{n} =  q^{n-s} \sum_{y \in A^S_{\text{nc}}} ( 1 - \delta_{ (y)\mu, k} ) +  ( q^{n-s} - 1 ) q.  \]
As $s <n$, this implies that $\frac{w}{n}$ is an integer divisible by $q$, which is a contradiction. 
\end{proof}


\subsection{Relative Rank of $\ICA(G;A)$ in $\CA(G;A)$} \label{relative-rank}

One of the fundamental problems in the study of a finite monoid $M$ is the determination of the cardinality of a smallest generating subset of $M$; this is called the \emph{rank} of $M$ and denoted by $\Rank(M)$:
\[ \Rank(M) := \min \{ \vert T \vert :  T \subseteq M \text{ and } \langle T \rangle = M \}. \]
It is well-known that, if $X$ is any finite set, the rank of the full transformation monoid $\Tran(X)$ is $3$, while the rank of the symmetric group $\Sym(X)$ is $2$ (see \cite[Ch.~3]{GM09}). Ranks of various finite monoids have been determined in the literature before (e.g. see \cite{ABJS14,AS09,GH87,G14,HM90}). 

In \cite{CRG16a}, the rank of $\CA(\mathbb{Z}_n; A)$, where $\mathbb{Z}_n$ is the cyclic group of order $n$, was studied and determined when $n \in \{ p, 2^k, 2^k p : k \geq 1, \ p \text{ odd prime} \}$. Moreover, the following problem was proposed:
\begin{problem}\label{problem}
For any finite group $G$ and finite set $A$, determine $\Rank(\CA(G;A))$.
\end{problem}                      

For any finite monoid $M$ and $U \subseteq M$, the \emph{relative rank} of $U$ in $M$, denoted by $\Rank(M:U)$, is the minimum cardinality of a subset $V \subseteq M$ such that $\langle U \cup V \rangle = M$. For example, for any finite set $X$, 
\[ \Rank(\Tran(X): \Sym(X)) = 1, \]
as any $\tau \in \Tran(X)$ with $\vert (X) \tau \vert = \vert X \vert -1$ satisfies $\langle \Sym(X) \cup \{ \tau \} \rangle = \Tran(X)$. One of the main tools used to determine $\Rank(\CA(G;A))$ is based on the following result (see \cite[Lemma 3.1]{AS09}).

\begin{lemma} \label{le:preliminar}
Let $M$ be a finite monoid and let $U$ be its group of units. Then,
\[ \Rank(M) = \Rank( M : U ) + \Rank(U). \]
\end{lemma}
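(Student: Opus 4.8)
The plan is to prove the two inequalities $\Rank(M) \le \Rank(M:U) + \Rank(U)$ and $\Rank(M) \ge \Rank(M:U) + \Rank(U)$ separately. The first is immediate: choose a generating set $W \subseteq U$ of the group $U$ with $|W| = \Rank(U)$, and a set $V \subseteq M$ with $\langle U \cup V \rangle = M$ and $|V| = \Rank(M:U)$. Since $\langle W \rangle = U$, we have $\langle W \cup V \rangle \supseteq \langle U \cup V \rangle = M$, so $W \cup V$ generates $M$ and $\Rank(M) \le |W| + |V| = \Rank(U) + \Rank(M:U)$.

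The substance lies in the reverse inequality, and the key structural fact I would establish first is that $M \setminus U$ is a (two-sided) ideal of $M$. Here finiteness is essential through the following observation: in a finite monoid, $ab = \id$ forces $ba = \id$. Indeed, from $a(bx) = x$ for all $x \in M$, left multiplication by $b$ is injective and left multiplication by $a$ is surjective on the finite set $M$, so both are bijections; surjectivity yields $y$ with $by = \id$, and then $a = a(by) = (ab)y = y$, whence $ba = by = \id$ and $a \in U$. Consequently, if $m \in M \setminus U$ but $ma \in U$, say $(ma)c = \id$, then $m(ac) = \id$ exhibits $m$ as one-sided invertible, hence a unit, contradicting $m \notin U$; the symmetric computation with $c(ma)=\id$ handles the case $am \in U$. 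Therefore no product having a factor outside $U$ can lie in $U$.

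Given this, I would conclude as follows. Let $T$ be an arbitrary generating set of $M$ and split it as $T = T_U \sqcup T_N$ with $T_U := T \cap U$ and $T_N := T \setminus U$. Each element of $U$ is a word in the letters of $T$; since $M \setminus U$ is an ideal, such a word can evaluate inside $U$ only if every one of its letters lies in $T_U$. Hence $\langle T_U \rangle = U$ and $|T_U| \ge \Rank(U)$. On the other hand $\langle U \cup T_N \rangle \supseteq \langle T_U \cup T_N \rangle = \langle T \rangle = M$, so $T_N$ is an admissible relative generating set and $|T_N| \ge \Rank(M:U)$. Adding the two bounds gives $|T| = |T_U| + |T_N| \ge \Rank(U) + \Rank(M:U)$, and taking $T$ of minimal size yields $\Rank(M) \ge \Rank(U) + \Rank(M:U)$, completing the argument.

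The main obstacle is precisely the ideal lemma: the bookkeeping with generating sets downstream is routine, but the claim that $M \setminus U$ is an ideal genuinely depends on finiteness via the one-sided-invertible-implies-invertible phenomenon, and the care needed is in handling both the left- and right-multiplication cases correctly so that the ideal is genuinely two-sided.
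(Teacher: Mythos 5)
Your proof is correct. The paper itself gives no proof of this lemma---it is quoted from Ara\'ujo and Schneider \cite[Lemma 3.1]{AS09}---and your argument (first showing that $M \setminus U$ is a two-sided ideal because one-sided inverses are two-sided in a finite monoid, then splitting an arbitrary generating set into its unit part $T_U$, which must generate $U$, and its non-unit part $T_N$, which is a valid relative generating set) is precisely the standard argument used in that reference. The only blemish is typographical: in handling the case $am \in U$ you wrote $c(ma)=\id$ where you meant $c(am)=\id$; the symmetric computation goes through exactly as you claim.
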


We shall determine the relative rank of $\ICA(G;A)$ in $\CA(G;A)$ for any finite abelian group $G$ and finite set $A$. In order to achieve this, we prove two lemmas that hold even when $G$ is nonabelian and have relevance in their own right. 

\begin{lemma} \label{le:action-orbit}
 Let $G$ be a finite group and $A$ a finite set of size $q\geq 2$. Let $\tau \in \CA(G;A)$ and $x\in A^G$. If $(x)\tau \in xG$, then $\tau \vert_{xG} \in \Sym(xG)$. 
\end{lemma}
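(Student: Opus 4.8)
The plan is to show that $\tau$ maps the orbit $xG$ \emph{onto} itself and then use finiteness to upgrade this surjection to a bijection. The entire argument rests on Lemma \ref{preserve} together with the fact that $G$-orbits are finite; I do not anticipate any genuine obstacle.

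First I would record that $xG$ is a finite set: by the Orbit-Stabiliser Theorem (Theorem \ref{orbit-stabiliser}), $\vert xG \vert = \vert G \vert / \vert G_x \vert \leq n$. Next, Lemma \ref{preserve} applied to the $G$-equivariant map $\tau$ gives $(xG)\tau = (x)\tau G$. By hypothesis $(x)\tau \in xG$; since the members of $\mathcal{O}(G;A)$ are pairwise disjoint and partition $A^G$, the orbit containing $(x)\tau$ must be $xG$ itself, i.e. $(x)\tau G = xG$. Combining these two equalities yields $(xG)\tau = xG$, so the restriction $\tau\vert_{xG}$ is a self-map of the finite set $xG$ that is surjective.

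The conclusion then follows from the elementary fact that any surjective self-map of a finite set is a bijection (pigeonhole principle); hence $\tau\vert_{xG} \in \Sym(xG)$, as required. The only point worth isolating is that the hypothesis $(x)\tau \in xG$ is exactly what promotes the a priori inclusion $(xG)\tau = (x)\tau G$ to the equality $(xG)\tau = xG$: without it, Lemma \ref{preserve} would only tell us that $\tau$ sends $xG$ bijectively \emph{onto} a possibly different orbit of the same cardinality. So the crux is simply to translate ``$(x)\tau$ lies in $xG$'' into ``$\tau$ preserves $xG$ setwise'', after which finiteness closes the argument.
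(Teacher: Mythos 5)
Your proposal is correct and follows essentially the same route as the paper's proof: both use Lemma \ref{preserve} to get $(xG)\tau = (x)\tau G$, invoke the hypothesis $(x)\tau \in xG$ (together with the fact that orbits partition $A^G$) to conclude $(xG)\tau = xG$, and then upgrade the surjective self-map of the finite set $xG$ to a bijection. No gaps; your added remarks on the role of the hypothesis are accurate but not needed beyond what the paper records.
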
 
\begin{proof}
It is enough to show that $(xG)\tau = xG$ as this implies that $\tau \vert_{xG} : xG \to xG$ is surjective, so it is bijective by the finiteness of $xG$. Since $(x)\tau \in xG$, we know that $(x)\tau G = xG$. Hence, by Lemma \ref{preserve}, $(xG)\tau = (x) \tau G = x G$.  
\end{proof}

\begin{remark}\label{GoE}
Recall that a \emph{Garden of Eden} (GoE) of $\tau \in \CA(G;A)$ is a configuration $x \in A^G$ such that $x \notin (A^G)\tau$. As $A^G$ is finite in our setting, note that $\tau$ is non-invertible if and only if it has a GoE. Moreover, by $G$-equivariance, $x$ is a GoE of $\tau$ if and only if $xG$ is a GoE of $\tau$, so we shall talk about GoE $G$-orbits, rather than GoE configurations. 
\end{remark}

Denote by $\mathcal{C}_G$ the set of conjugacy classes of subgroups of $G$. For any $[H_1], [H_2] \in \mathcal{C}_G$, write $[H_1] \leq [H_2]$ if $H_1 \leq g^{-1} H_2 g$, for some $g \in G$.

\begin{remark}
The relation $\leq$ defined above is a well-defined partial order on $\mathcal{C}_G$. Clearly, $\leq$ is reflexive and transitive. In order to show antisymmetry, suppose that $[H_1] \leq [H_2]$ and $[H_2] \leq [H_1]$. Then, $H_1 \leq g^{-1} H_2 g$ and $H_2 \leq f^{-1} H_1 f$, for some $f,g \in G$, which implies that $\vert H_1 \vert \leq \vert H_2 \vert$ and $\vert H_2 \vert \leq \vert H_1 \vert$. As $H_1$ and $H_2$ are finite, $\vert H_1 \vert = \vert H_2 \vert$, and $H_1 = g^{-1} H_2 g$. This shows that $[H_1] = [H_2]$.     
\end{remark}

\begin{lemma}\label{le:idem}
Let $G$ be a finite group and $A$ a finite set of size $q\geq 2$. Let $x , y \in A^G$ be such that $xG \neq yG$. There exists a non-invertible $\tau \in \CA(G;A)$ such that $(x)\tau = y$ if and only if $G_x \leq G_y$.
\end{lemma}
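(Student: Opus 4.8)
The plan is to treat the two implications separately. The forward direction is a direct consequence of $G$-equivariance (it uses no invertibility hypothesis at all and essentially duplicates the first half of Lemma \ref{le-ICA}), while the converse is the substantive part, where I would exhibit an explicit idempotent cellular automaton collapsing one orbit onto the other.

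For the forward direction, suppose $\tau \in \CA(G;A)$ satisfies $(x)\tau = y$. Given $g \in G_x$, I would compute, using $G$-equivariance,
\[ y = (x)\tau = (x \cdot g)\tau = (x)\tau \cdot g = y \cdot g, \]
so $g \in G_y$; hence $G_x \leq G_y$. Notice this argument applies to \emph{any} cellular automaton sending $x$ to $y$, invertible or not, so the genuine content of the lemma lies in the converse.

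For the converse, assume $G_x \leq G_y$. Since $xG \neq yG$, the two orbits are disjoint (orbits partition $A^G$), and in particular $x \neq y$. I would define $\tau : A^G \to A^G$ by
\[ (z)\tau := \begin{cases} y \cdot g & \text{if } z = x \cdot g \text{ for some } g \in G, \\ z & \text{otherwise}, \end{cases} \]
i.e. $\tau$ sends the orbit $xG$ onto $yG$ through $x \cdot g \mapsto y \cdot g$ and fixes everything else. Well-definedness is exactly where the hypothesis enters: if $x \cdot g = x \cdot h$ then $gh^{-1} \in G_x \leq G_y$, whence $y \cdot g = y \cdot h$. A short check shows $\tau$ is $G$-equivariant --- for $z = x \cdot g$ one has $(z \cdot h)\tau = y \cdot gh = (y \cdot g) \cdot h = (z)\tau \cdot h$, and on the $G$-invariant complement of $xG$ equivariance is immediate --- so $\tau \in \CA(G;A)$ by Theorem \ref{AG-finite}. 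By construction $(x)\tau = y \cdot e = y$, as required.

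Finally I would verify non-invertibility. Since each value $y \cdot g$ lies in $yG$, which is disjoint from $xG$, iterating gives $(z)\tau\tau = (z)\tau$, so $\tau$ is in fact idempotent; more to the point, both $x$ and $y$ map to $y$ while $x \neq y$, so $\tau$ is not injective and therefore not bijective on the finite set $A^G$, i.e. non-invertible. I do not expect a real obstacle here: the orbit-collapse map simultaneously secures well-definedness (from $G_x \leq G_y$), equivariance, the prescribed value at $x$, and non-injectivity, and the only point requiring genuine care is to route the hypothesis $G_x \leq G_y$ through the well-definedness check.
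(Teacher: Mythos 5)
Your proof is correct and follows essentially the same route as the paper: the forward direction via $G$-equivariance alone (exactly the first line of the proof of Lemma \ref{le-ICA}), and the converse via the same idempotent orbit-collapse map $\tau_{x,y}$, with well-definedness hinging on $G_x \leq G_y$. The only difference is that you spell out the equivariance and non-injectivity checks that the paper dismisses as ``clear,'' which is fine.
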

\begin{proof}
In general, for any $\tau \in \CA(G;A)$ such that $(x)\tau = y$, we have $G_x \leq G_y$, because we may argue as in the first line of the proof of Lemma \ref{le-ICA}.

Conversely, suppose that $G_x \leq G_y$. We define an idempotent transformation $\tau_{x,y} : A^G \to A^G$ as follows:
\[ (z) \tau_{x,y} := \begin{cases}
y \cdot g  & \text{if } z = x \cdot g, \\ 
z & \text{otherwise},
 \end{cases}  \quad \quad \forall z \in A^G. \] 
Note that $\tau_{x,y}$ is well-defined because $x \cdot g = x \cdot  h$, implies that $gh^{-1} \in G_x \leq G_y$, so $y \cdot g = y \cdot h$. Clearly, $\tau_{x,y}$ is non-invertible and $G$-equivariant, so $\tau_{x,y}\in \CA(G;A) \setminus \ICA(G;A)$. 
\end{proof}

\begin{corollary} 
Let $G$ be a finite group and $A$ a finite set of size $q\geq 2$. Let $x , y \in A^G$ be such that $xG \neq yG$. There exists a non-invertible $\tau \in \CA(G;A)$ such that $(xG)\tau = yG$ if and only if $[G_x] \leq [G_y]$.
\end{corollary}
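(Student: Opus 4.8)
The corollary is the orbit-level version of Lemma~\ref{le:idem}, and the natural strategy is to reduce the orbit statement to the configuration statement by choosing suitable representatives. The plan is to prove the two implications separately, using the partial order $\leq$ on conjugacy classes together with Lemma~\ref{le:idem} and the orbit-preservation property of Lemma~\ref{preserve}.

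\medskip

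\noindent\textbf{The forward direction.} First I would assume there exists a non-invertible $\tau \in \CA(G;A)$ with $(xG)\tau = yG$. By Lemma~\ref{preserve}, $(xG)\tau = (x)\tau G$, so $(x)\tau G = yG$, which means $(x)\tau = y \cdot g$ for some $g \in G$. Applying the easy direction of Lemma~\ref{le:idem} (or rather the general fact, stated in its first line, that any $\tau \in \CA(G;A)$ with $(x)\tau = y \cdot g$ forces $G_x \leq G_{y \cdot g}$), I obtain $G_x \leq G_{y \cdot g}$. By Theorem~\ref{orbit-stabiliser}, $G_{y \cdot g} = g^{-1} G_y g$, so $G_x \leq g^{-1} G_y g$, which is precisely the statement $[G_x] \leq [G_y]$ by the definition of the partial order on $\mathcal{C}_G$.

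\medskip

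\noindent\textbf{The converse direction.} Conversely, suppose $[G_x] \leq [G_y]$, i.e.\ $G_x \leq g^{-1} G_y g$ for some $g \in G$. The idea is to replace $y$ by the orbit representative $y \cdot g$: by Theorem~\ref{orbit-stabiliser}, $G_{y \cdot g} = g^{-1} G_y g$, so $G_x \leq G_{y \cdot g}$. Now I would invoke the converse direction of Lemma~\ref{le:idem} applied to the pair $x$ and $y \cdot g$ (noting that $xG \neq yG = (y\cdot g)G$, so the hypothesis of the lemma is met), producing a non-invertible $\tau \in \CA(G;A)$ with $(x)\tau = y \cdot g$. Finally, Lemma~\ref{preserve} gives $(xG)\tau = (x)\tau G = (y \cdot g)G = yG$, as required.

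\medskip

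\noindent\textbf{Main obstacle.} This proof is essentially routine once the translation between the orbit order $\leq$ and the subgroup containment is made correctly; the only point requiring care is the bookkeeping of the conjugation in the converse direction, namely ensuring that the representative $y \cdot g$ yields the containment $G_x \leq G_{y \cdot g}$ in the form Lemma~\ref{le:idem} demands. One subtlety is that $[G_x] \leq [G_y]$ in the defined order says $H_1 \leq g^{-1} H_2 g$ for \emph{some} conjugate, and I must verify this is exactly the conjugate realized by the stabiliser of some orbit element of $yG$; this is guaranteed by the ``moreover'' clause of Theorem~\ref{orbit-stabiliser}, which tells us the stabilisers of the elements of a single orbit run precisely through the conjugacy class of $G_y$. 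Hence every conjugate $g^{-1} G_y g$ is realized as $G_{y \cdot g}$, and the argument closes without gaps.
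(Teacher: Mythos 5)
Your proof is correct and follows exactly the route the paper intends: the paper states this corollary without proof because it is the verbatim analogue of Corollary~\ref{conjugate}, whose proof uses the same two steps you give (pass to a representative $y \cdot g$ via Lemma~\ref{preserve} and Theorem~\ref{orbit-stabiliser}, then apply the configuration-level lemma, here Lemma~\ref{le:idem} in place of Lemma~\ref{le-ICA}). Your handling of the conjugation bookkeeping, including the observation that $xG \neq (y\cdot g)G$ keeps the hypothesis of Lemma~\ref{le:idem} satisfied, is exactly right.
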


Consider the directed graph $(\mathcal{C}_G, \mathcal{E}_G)$ with vertex set $\mathcal{C}_G$ and edge set
\[ \mathcal{E}_G := \left\{ ([H_i], [H_j]) \in \mathcal{C}_G^2 : [H_i] \leq [H_j] \right\}.  \]
When $G$ is abelian, this graph coincides with the subgroup lattice of $G$.

\begin{remark}
Lemma \ref{le:idem} may be restated in terms of $(\mathcal{C}_G, \mathcal{E}_G)$. By Lemma \ref{le:action-orbit}, loops $([H_i],[H_i])$ do not have corresponding non-invertible CA when $\alpha_{[H_i]}(G;A)=1$.  
\end{remark}

Recall that an action of $G$ on a set $X$ is \emph{transitive} if for any $x,y \in X$ there exists $g \in G$ such that $x \cdot g = y$ (i.e. $X = xG$, for any $x \in X$). The following result will be useful in order to prove the main theorem of this section.

\begin{lemma} \label{Dedekind}
Let $G$ be a finite group and $A$ a finite set of size $q\geq 2$. Then $\ICA(G;A)$ is transitive on every $G$-orbit on $A^G$ if and only if $G$ is a finite Dedekind group. 
\end{lemma}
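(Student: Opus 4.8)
The plan is to reduce the statement to a single orbit and then to a clean normaliser computation. Fix a $G$-orbit $O = xG \subseteq A^G$ and set $H := G_x$. First I would observe that $\ICA(G;A)$ acts transitively on $O$ if and only if the group $\ICA(O)$ of $G$-equivariant permutations of $O$ defined in \eqref{centraliser} acts transitively on $O$. Indeed, if $\tau \in \ICA(G;A)$ satisfies $(x)\tau = z$ with $z \in O$, then $(x)\tau \in xG$, so Lemma \ref{le:action-orbit} forces $\tau\vert_O \in \Sym(O)$; being the restriction of a $G$-equivariant map, $\tau\vert_O$ lies in $\ICA(O)$ and sends $x$ to $z$. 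Conversely, any $\sigma \in \ICA(O)$ extends to an element of $\ICA(G;A)$ by letting it act as the identity on the ($G$-invariant) complement $A^G \setminus O$; the resulting map is $G$-equivariant and bijective, hence belongs to $\ICA(G;A)$ by Theorem \ref{AG-finite}. Thus reachability of $z$ from $x$ within $O$ is governed entirely by $\ICA(O)$.

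The heart of the argument is to describe exactly which configurations of $O$ are reachable from $x$ inside $\ICA(O)$. I would show that, for $g \in G$, there is $\sigma \in \ICA(O)$ with $(x)\sigma = x \cdot g$ if and only if $g \in N_G(H)$. The rule $(x \cdot a)\sigma := x \cdot ga$ is the only candidate compatible with $G$-equivariance, and it is well defined precisely when $x \cdot a = x \cdot a'$ implies $x \cdot ga = x \cdot ga'$; since $x \cdot a = x \cdot a'$ is equivalent to $a a'^{-1} \in H$, this says that $a a'^{-1} \in H$ implies $g(a a'^{-1})g^{-1} \in H$, which by finiteness is exactly the condition $g \in N_G(H)$. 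When $g \in N_G(H)$ the map is a $G$-equivariant bijection, with inverse supplied by $g^{-1} \in N_G(H)$, so it indeed lies in $\ICA(O)$. Consequently the $\ICA(O)$-orbit of $x$ equals $\{ x \cdot g : g \in N_G(H) \}$, which by the Orbit-Stabiliser Theorem \ref{orbit-stabiliser} has size $[N_G(H):H]$, whereas $\vert O \vert = [G:H]$. Hence $\ICA(O)$ is transitive on $O$ if and only if $N_G(H) = G$, that is, if and only if $H$ is normal in $G$.

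To finish, I would combine this orbit-wise criterion with Remark \ref{rk:subgroups}: every subgroup $H \leq G$ occurs as the stabiliser $G_x$ of some configuration, hence as the stabiliser of some orbit. Therefore $\ICA(G;A)$ is transitive on every $G$-orbit if and only if every subgroup of $G$ is normal, i.e. if and only if $G$ is a Dedekind group. I expect the main obstacle to be the well-definedness computation of the second paragraph, in particular pinning down the direction of the conjugation so that the reachability condition emerges as membership in $N_G(H)$ rather than in some coset of it; once this is settled, the counting and the two reduction steps are routine. As an alternative, the isomorphism $\ICA(O) \cong N_G(H)/H$ together with the freeness of its action on $O$ already used in the proof of Theorem \ref{th:ICA} yields the same orbit-size count, giving a slightly shorter but less self-contained route.
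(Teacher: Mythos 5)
Your proof is correct, but it reaches the key fact by a more self-contained route than the paper does. The paper's proof has the same skeleton as yours (transitivity on $xG$ holds iff $G_x$ is normal, then invoke Remark \ref{rk:subgroups} to realise every subgroup as a stabiliser), but it obtains the crucial statement --- that $\ICA(G;A)$ acts on $xG$ exactly as $N_G(G_x)/G_x$ via $x \cdot (G_x g) := x \cdot g$ --- as an immediate citation of Theorem \ref{th:ICA}, whose proof in turn rests on the identification $\ICA(O) \cong C_{\Sym(O)}(G^O) \cong N_G(H)/H$ from Dixon--Mortimer. You instead prove this reachability description from first principles: the forced formula $(x \cdot a)\sigma = x \cdot ga$, the well-definedness computation showing $gHg^{-1} \subseteq H$ (hence $g \in N_G(H)$ by finiteness) is exactly the constraint, and the explicit restriction/extension correspondence between $\ICA(G;A)$ and $\ICA(O)$ via Lemma \ref{le:action-orbit} and Theorem \ref{AG-finite} --- a step the paper never makes explicit. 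What each approach buys: the paper's argument is three lines long once the structure theorem is available; yours is independent of the wreath-product machinery and of the external centraliser theorem, and it verifies the freeness/orbit-size details ($[N_G(H):H]$ reachable configurations versus $[G:H]$ in total) that the paper leaves implicit. Your closing remark correctly identifies the paper's route as the shorter alternative.
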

\begin{proof}
Let $x \in A^G$ be a configuration. By Theorem \ref{th:ICA}, the group $\ICA(G;A)$ acts on $xG$ as the group $N_G(G_x)/G_x$ via the action $x \cdot (G_x g) := x \cdot g$, for any $G_x g \in N_G(G_x)/G_x$. Note that $N_G(G_x)/G_x$ is transitive on $xG$ if and only if $G = N_G(G_x)$, which holds if and only if $G_x$ is normal in $G$. As any subgroup of $G$ occurs as a stabiliser of a configuration, this shows that $\ICA(G;A)$ is transitive on $xG$, for all $x \in A^G$, if and only if every subgroup of $G$ is normal. 
\end{proof}

\begin{remark}
It is obvious, by definition, that $G$ is transitive on a $G$-orbit $xG$. However, Lemma \ref{Dedekind} establishes a criterion for the transitivity of the group $\ICA(G;A)$ on $xG$.
\end{remark}

\begin{theorem}[Relative rank of $\ICA(G;A)$ on $\CA(G;A)$] \label{th:relative rank}
Let $G$ be a finite group and $A$ a finite set of size $q\geq 2$. Let $I_2(G)$ be the set of subgroups of $G$ of index $2$: 
\[ I_2(G) = \{ H \leq G : [G:H] = 2 \}. \]
Then,
\[ \Rank(\CA(G;A):\ICA(G;A)) \geq  \begin{cases}
\vert \mathcal{E}_G \vert -  \vert I_2(G) \vert & \text{if } q=2, \\
\vert \mathcal{E}_G \vert & \text{otherwise},
\end{cases} \]
with equality if and only if $G$ is a finite Dedekind group.
\end{theorem}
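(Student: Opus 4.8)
The plan is to work entirely at the level of the induced action on the orbit set $\mathcal{O}(G;A)$. By Lemma \ref{preserve} every $\tau \in \CA(G;A)$ induces a transformation $\bar\tau$ of $\mathcal{O}(G;A)$, and by Corollary \ref{conjugate} together with Lemma \ref{le:idem} the type function $xG \mapsto [G_x]$ can only increase along $\bar\tau$: if $(xG)\bar\tau = yG$ then $[G_x] \le [G_y]$ in $\mathcal{C}_G$. Invertible CA preserve each type class $B_{[H]}$ and, by Corollary \ref{conjugate}, act transitively on the \emph{set} of orbits inside each $B_{[H]}$; moreover, by Lemma \ref{le:action-orbit} an invertible CA restricts to a bijection on every orbit it fixes setwise. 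Thus the only way to change a type or to merge two orbits is through the non-invertible generators of a set $V$, and this reduces the whole problem to a combinatorial analysis of how collapses propagate along the digraph $(\mathcal{C}_G, \mathcal{E}_G)$.

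For the lower bound I associate with each \emph{useful} edge a witness. Call an edge $e = ([H_i],[H_j]) \in \mathcal{E}_G$ useful if either $[H_i] < [H_j]$, or $[H_i] = [H_j]$ and $\alpha_{[H_i]} \ge 2$; by Lemma \ref{alpha1} the discarded loops are exactly those at index-$2$ subgroups when $q = 2$ (for $q \ge 3$ every $\alpha_{[H]} \ge 2$, so no loop is discarded), which accounts for the term $-|I_2(G)|$. For each useful $e$ I choose configurations $x,y$ with $G_x \le G_y$, $[G_x]=[H_i]$, $[G_y]=[H_j]$ and (for a loop) $xG \ne yG$, and let $\xi_e := \tau_{x,y}$ be the idempotent of Lemma \ref{le:idem}, which performs the single atomic collapse $xG \mapsto yG$ and fixes every other $G$-orbit. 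The key principle is \emph{no repair under type monotonicity}: once a generator merges two orbits, no subsequent CA can separate them again, since images can only move upward in $\mathcal{C}_G$ and invertibles act bijectively on each orbit set. Consequently, a collapse along a non-covering edge cannot be simulated by a chain of collapses through an intermediate type without irreparably merging an intermediate orbit, so the witness $\xi_e$ can be produced from $\ICA(G;A) \cup V$ only if $V$ contains a generator realizing the transition $e$ \emph{directly} while fixing the orbits that $\xi_e$ fixes.

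It remains to turn this principle into an injective charging $\Phi$ from the set of useful edges to $V$, and this is the step I expect to be the main obstacle. The plan is to send each useful edge $e$ to a generator occurring in a fixed factorization of $\xi_e$ over $\ICA(G;A)\cup V$ that realizes the extremal (largest) type-jump of that factorization, and to prove injectivity by showing that two distinct witnesses $\xi_e, \xi_{e'}$ cannot be charged to a common generator: the no-repair principle forces the charged generator, after composition with invertibles, to reproduce the exact atomic collapse of $e$ while fixing all other orbits, and this pins down both endpoints $[H_i], [H_j]$, hence determines $e$. Carrying out this bookkeeping carefully, by tracking which orbits are merged at each step and verifying that invertibles cannot undo a merge, yields $\Rank(\CA(G;A):\ICA(G;A)) \ge |\mathcal{E}_G|$ for $q \ge 3$ and $\ge |\mathcal{E}_G| - |I_2(G)|$ for $q = 2$.

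For the converse, when $G$ is a finite Dedekind group I would take $V = \{\xi_e : e \text{ useful}\}$ and prove $\langle \ICA(G;A)\cup V\rangle = \CA(G;A)$, giving equality. Here the extra strength of the Dedekind hypothesis is exactly what is needed: by Lemma \ref{Dedekind}, $\ICA(G;A)$ is transitive on each individual $G$-orbit, and combined with the $\Sym_{\alpha_i}$-factors of Theorem \ref{th:ICA} this makes $\ICA(G;A)$ transitive on the whole of each block $B_{[H]}$ as a set of configurations, not merely on its orbits. Thus for an arbitrary $\tau \in \CA(G;A)$ one decomposes its action orbit-by-orbit and realizes each elementary merge by conjugating the appropriate $\xi_e$ by invertibles that first align configurations exactly; composing these reconstructs $\tau$. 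Finally, if $G$ is not Dedekind then some subgroup is non-normal and, by Lemma \ref{Dedekind}, $\ICA(G;A)$ fails to be point-transitive on the corresponding block $B_{[H]}$; a single idempotent per edge can then no longer be re-aligned by invertibles to every required configuration-level collapse, so strictly more than the stated number of generators is forced, and the inequality is strict. This establishes the ``if and only if'' for equality.
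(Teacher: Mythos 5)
Your overall architecture is the same as the paper's: the same witness set (one idempotent of Lemma \ref{le:idem} per non-loop edge of $(\mathcal{C}_G,\mathcal{E}_G)$, plus one per loop with $\alpha_{[H]}\geq 2$, with Lemma \ref{alpha1} accounting for the $-|I_2(G)|$ correction when $q=2$), the same use of Lemma \ref{Dedekind} and the $\Sym_{\alpha_i}$-factors of Theorem \ref{th:ICA} to get equality in the Dedekind case, and the same alignment-failure intuition for strictness otherwise. The problem is that the heart of the lower bound --- the step you yourself flag as ``the main obstacle'' --- is exactly the part that carries all the difficulty, and your ``no repair under type monotonicity'' principle is not strong enough to close it. Two specific issues: (a) your claim that a generator charged to an edge $e$ must ``reproduce the exact atomic collapse of $e$ while fixing all other orbits'' is too strong and cannot be what injectivity rests on, since a generator witnessing $e$ may freely permute all other orbits (any element of $\ICA(G;A)$ times a witness is again a witness); (b) no-repair alone does not rule out that a single generator could serve two distinct edges, e.g.\ that a collapse along $([H_i],[H_j])$ could be manufactured from generators whose own collapses have different source or target blocks, composed with invertibles. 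The paper closes this with a sharper invariant: it defines a \emph{Unique-Garden-of-Eden CA (UCA) of type} $(B_i,B_j)$ --- a CA whose image misses exactly one $G$-orbit, lying in $B_i$, and which maps that orbit into $B_j$ --- and proves that if a UCA of type $(B_i,B_j)$ factors as $\omega_1\cdots\omega_\ell$ over $\ICA(G;A)\cup W$, then the \emph{first non-invertible factor} $\omega_s$ is itself a UCA of type $(B_i,B_j)$. The proof of that needs a cardinality argument (if $\omega_s$ missed two orbits, the image of the product would be too small, so $\omega_s$ has a unique GoE orbit $\Omega_s$) followed by a three-case analysis of where $(\Omega_s)\omega_s$ can land, the delicate case being $(\Omega_s)\omega_s\subseteq B_i$ with $i\neq j$, which is excluded because it would force $\omega$ to identify two orbits of $B_i$, contradicting that $\omega$'s unique GoE orbit maps into $B_j$. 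Since $W$ with $|W|<|V|$ must miss some type, and types are preserved under composition with invertibles, this yields the bound. Your charging scheme, as described, would have to reconstruct essentially this argument; as it stands it is a plan, not a proof.

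Two smaller points. For the Dedekind direction your outline is essentially the paper's Claim 2 (point-transitivity of $\ICA(G;A)$ on each block $B_{[H]}$, which does hold for Dedekind $G$, plus conjugation of the idempotents), though the paper additionally needs the wreath-product identification $(G/G_{x_i})\wr\Tran_{\alpha_i}$ and the fact that $\Tran_{\alpha_i}$ is generated by $\Sym_{\alpha_i}$ together with one idempotent of defect one to handle the within-block part; you should make that explicit. For the non-Dedekind strictness, your one-sentence argument shows at best that \emph{your particular} $V$ fails to generate; to conclude $\Rank(\CA(G;A):\ICA(G;A))>|V|$ you must argue that \emph{no} set of size $|V|$ works, which again requires combining the alignment obstruction (two $\ICA(G;A)$-orbits inside a single $G$-orbit with non-normal stabiliser force two essentially different collapses along the same edge) with the UCA-type counting from the lower bound; this is also where the paper's own exposition is terse, but the counting machinery it has already set up is what makes the conclusion available.
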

\begin{proof}
Let $[H_1], [H_2], \dots, [H_r]$ be the list of different conjugacy classes subgroups of $G$ with $H_1 = G$. Suppose further that this is ordered such that 
\begin{equation} \label{order}
\vert H_1 \vert \geq \vert H_2 \vert \geq \dots \geq \vert H_r \vert 
\end{equation}
For each $1 \leq i \leq r$, let $\alpha_i := \alpha_{[H_i]}(G;A)$ and $B_i := B_{[H_i]}(G;A)$. Fix orbits $x_i G \subseteq B_i$ such that $G_{x_i} <  G_{x_j}$ whenever $[H_i] < [H_j]$. For every $\alpha_i \geq 2$, fix orbits $y_i G \subseteq B_i$ such that $x_i G \neq y_i G$. Consider the set  
\[ V := \left\{ \tau_{x_i,x_j} : G_{x_i} < G_{x_j} \right\} \cup \left\{ \tau_{x_i, y_i} : \alpha_i \geq 2 \right\}, \]	
and $\tau_{x_i, x_j}$ and $\tau_{x_i, y_i}$ are the idempotents that map $x_i$ to $x_j$ and $x_i$ to $y_i$, respectively, as defined in Lemma \ref{le:idem}. Observe that
\[ \vert V \vert = \vert \mathcal{E}_G \vert - \sum_{i=1}^n \delta(\alpha_i, 1) =  \begin{cases}
\vert \mathcal{E}_G \vert -  \vert I_2(G) \vert & \text{if } q=2, \\
\vert \mathcal{E}_G \vert & \text{otherwise},
\end{cases} \] 
where the last equality follows by Lemma \ref{alpha1}. 

\begin{claim}
The relative rank of $\ICA(G;A)$ on $\CA(G;A)$ is at least $\vert V \vert$. 
\end{claim}
\begin{proof}
Suppose there exists $W \subseteq \CA(G;A)$ such that $\vert W \vert < \vert V \vert$ and 
\[ \left\langle \ICA(G ; A) \cup W  \right\rangle = \CA(G ; A). \]

Let $\tau \in \CA(G;A)$. We say that $\tau$ is a \emph{Unique-Garden-of-Eden CA (UCA) of type} $(B_i, B_j)$ if the following holds:
\begin{description}
\item[$(\star)$] $\tau$ has a unique Garden of Eden $G$-orbit $O_i$ (i.e. $(A^G)\tau =A^G \setminus O_i$), and it satisfies $O_i \subseteq B_i$ and $(O_i)\tau \subseteq B_j$.
\end{description}
Note that UCA of type $(B_i, B_i)$ only exist when there are at least two different orbits in $B_i$, i.e. $\alpha_i \geq 2$.

For example, the idempotents $\tau_{x_i, y_i} \in V$, with $\alpha_i \geq 2$, are UCA of type $(B_i,B_i)$, while the idempotents $\tau_{x_i, x_j}$, with  $G_{x_i} < G_{x_j}$, are UCA of type $(B_i,B_j)$ with $O_i = x_i G$. Note that $\tau \in \CA(G;A)$ is a UCA of type $(B_i, B_j)$ if and only if $\langle \ICA(G; A) , \tau \rangle$ contains a UCA of type $(B_i,B_j)$ if and only if all non-invertible elements of $\langle \ICA(G; A) , \tau \rangle$ are UCA of type $(B_i, B_j)$ (because any $\phi \in \ICA(G;A)$ always satisfies $\phi(B_k) = B_k$ for all $k$, by Corollary \ref{conjugate}).  

As $\vert W \vert < \vert V \vert$, and $V$ has exactly one UCA of each possible type (see Lemma \ref{le:idem}), there must be $\tau \in V$ such that there is no UCA in $W$ of the same type as $\tau$. Without loss of generality, suppose that the type of $\tau$ is $(B_i, B_j)$ (possibly with $i=j$). We finish the proof of the claim by showing that there is no UCA of type $(B_i, B_j)$ in $\langle W \rangle$. This would imply that there is no UCA of type $(B_i, B_j)$ in $\left\langle \ICA(G ; A) \cup W  \right\rangle$, contradicting that $\left\langle \ICA(G ; A) \cup W  \right\rangle = \CA(G ; A)$.

Assume that
\begin{equation} \label{factorisation}
 \omega := \omega_1 \dots \omega_s \dots \omega_\ell \in \langle W \rangle,  \text{ with } \omega_m \in W,\ \forall m = 1, \dots, \ell,  
\end{equation}
is a UCA of type $(B_i,B_j)$. First note that, as $\omega$ has no GoE in $B_k$, for all $k \neq i$, then $(B_k) \omega = B_k$. Hence,
\begin{equation} \label{help}
(B_k) \omega_m = B_k \text{ for all } k \neq i \text{ and } m = 1 \dots \ell,
\end{equation}
because $\omega_m$ cannot map any $G$-orbit of $B_k$ to a different subshift $B_c$, as there is no CA mapping back $B_c$ to $B_k$ (see Lemmas \ref{le:idem} and \ref{le-ICA}), and $\omega_m$ does not have GoE inside $B_k$ because this would be a GoE for $\omega$ inside $B_k$. 

Now, observe that each non-invertible $\omega_m$ that appears in (\ref{factorisation}) has a unique GoE orbit $\Omega_m$ (inside $B_i$ because of (\ref{help})). This is true because if $\Omega_m$ consists of more than one orbit, then $(A^G) \omega_m = A^G \setminus \Omega_m$ implies that the size of $(A^G) \omega$ is strictly smaller than $\vert A^G \vert - \vert O_i \vert$, where $O_i \subseteq B_i$ is the unique GoE $G$-orbit of $\omega$.  

Let $\omega_s$ the the first non-invertible CA that appears in (\ref{factorisation}). We finish the proof by showing that $(\Omega_s) \omega_s \subseteq B_j$. Let $\Omega_s^\prime = (\Omega_s)\omega_{s-1}^{-1} \dots \omega_1^{-1}$. There are three possibilities: 
\begin{description}
\item[\textbf{Case 1:}] $(\Omega_s)\omega_s = P_c \subseteq B_c$ for $c \neq i,j$. Then:
\[ (\Omega_s^\prime) \omega = (\Omega_s)\omega_s \dots \omega_\ell = (P_c) \omega_{s+1} \dots \omega_\ell \subseteq B_c,  \]
where the last contention is because of (\ref{help}). However, as $\omega$ maps all orbits of $B_i$ to $B_i \cup B_j$, this case is impossible.

\item[\textbf{Case 2:}] $(\Omega_s)\omega_s  \subseteq B_i$. If $i = j$, then $\omega_s$ is a UCA of the same type as $\omega$. Hence, let $i \neq j$. By the uniqueness of $\Omega_s$, $(B_i \setminus \Omega_s) \omega_s =(B_i \setminus \Omega_s)$, so there exists a $G$-orbit $Q \subseteq B_i$, $Q \neq \Omega_s$, such that $(\Omega_s) \omega_s = (Q) \omega_s$. Let $Q^\prime = (Q)\omega_{s-1}^{-1} \dots \omega_1^{-1}$. Then,
\[ (\Omega_s^\prime) \omega = (\Omega_s) \omega_s \dots \Omega_\ell  = (Q) \omega_s \dots \omega_\ell = (Q^\prime) \omega.  \]
However, as $\omega$ maps its only GoE orbit to $B_j$, it does not collapse orbits in $B_i$. So this case, with $i \neq j$ is impossible.

\item[\textbf{Case 3:}] $(\Omega_s)\omega_s \subseteq B_j$. In this case, $\omega_s$ is a UCA of type $(B_i, B_j)$. 
\end{description}
 
In any case, we obtain a contradiction with the assumption that $W$ has no UCA of type $(B_i, B_j)$. Therefore, $\langle W \rangle$ has no UCA of type $(B_i, B_j)$.  
\end{proof}

\begin{claim}
If $G$ is a Dedekind group, then $\Rank(\CA(G;A):\ICA(G;A)) = \vert V \vert$.
\end{claim}
\begin{proof}
We will show that  
\[ \CA(G;A) = M:= \left\langle \ICA(G;A) \cup V \right\rangle. \]
For any $\sigma \in \CA(G;A)$, consider $\sigma_i \in \CA(G;A)$, $1 \leq i \leq r$, defined by
\[ (x)\sigma_i  = \begin{cases}
(x)\sigma & \text{if } x \in B_i \\
x & \text{otherwise}.
\end{cases}\]
By Lemmas \ref{le-ICA} and \ref{le:idem}, we know that $G_x \leq G_{(x)\sigma}$, for all $x \in A^G$, so $(B_i)\sigma \subseteq \bigcup_{j \leq i} B_j$ for all $i$ (recall the order given by (\ref{order})). Hence, we have the decomposition
\[ \sigma = \sigma_1 \circ \sigma_2 \circ \dots \circ \sigma_r.  \] 

We shall prove that $\sigma_i \in M$ for all $1 \leq i \leq r$. For each $\sigma_i$, decompose $B_i = B_i^{\prime} \cup B_i^{\prime \prime}$, where
\begin{align*}
B_i^\prime &:= \bigcup \left\{ P \in \mathcal{O}(G;A) : P \subseteq B_i \text{ and } (P)\sigma_i \subseteq B_j \text{ for some } j <i \right\}, \\
B_i^{\prime \prime} &:= \bigcup \left\{ P \in \mathcal{O}(G;A) : P \subseteq B_i \text{ and } (P)\sigma_i \subseteq B_i \right\}.
\end{align*}
If $\sigma^\prime_i$ and $\sigma^{\prime \prime}_i$ are the CA that act as $\sigma_i$ on $B_i^\prime$ and $B_i^{\prime \prime}$, respectively, and fix everything else, then $\sigma_i = \sigma_i^{\prime} \circ \sigma_{i}^{\prime \prime}$. We shall prove that $\sigma_i^\prime \in M$ and $\sigma_i^{\prime \prime} \in M$.
\begin{enumerate}
\item We show that $\sigma_i^{\prime} \in M$. For any orbit $P \subseteq B_i^\prime$, the orbit $Q:= (P)\sigma_{i}^{\prime}$ is contained in $B_j$ for some $j < i$. By Theorem \ref{th:ICA}, there exists an involution 
\[ \phi \in \left( (N_G(G_{x_i})/G_{x_i}) \wr \Sym_{\alpha_i} \right) \times \left( (N_G(G_{x_j})/G_{x_j}) \wr \Sym_{\alpha_j} \right) \leq \ICA(G;A) \]
that induces the double transposition $(x_i G, P) (x_j G,  Q)$. By Lemma \ref{Dedekind}, $\ICA(G;A)$ is transitive on $x_iG$ and $x_jG$ (as $G$ is Dedekind), so we may take $\phi$ such that $(x_i) \phi \sigma_i^\prime = (x_j) \phi$. Then,
\[  (z) \sigma_i^\prime  = (z) \phi  \tau_{x_i, x_j} \phi , \ \forall z \in P = (x_i G)\phi. \]
As $\sigma_i^\prime$ may be decomposed as a product of CA that only move one orbit in $B_i^{\prime}$, this shows that $\sigma_i^\prime \in M$.

\item We show that $\sigma_{i}^{\prime \prime} \in M$. In this case, $\sigma_i^{\prime \prime} \in \Tran(B_i)$. In fact, as $\sigma_i^{\prime \prime}$ preserves the partition of $B_i$ into $G$-orbits, Lemma \ref{le:action-orbit} and \cite[Lemma 2.1 (i)]{AS09} imply that $\sigma_i^{\prime \prime} \in (G/G_{x_i}) \wr \Tran_{\alpha_i}$. If $\alpha_i \geq 2$, the monoid $\Tran_{\alpha_i}$ is generated by $\Sym_{\alpha_i} \leq \ICA(G;A)$ together with the idempotent $\tau_{x_i, y_i}$. Hence, $\sigma_i^{\prime \prime} \in M$.    
\end{enumerate}
Therefore, we have established that $\CA(G;A) = \left\langle \ICA(G;A) \cup V \right\rangle$. 
\end{proof}

\begin{claim}
If $G$ is not a Dedekind group, then $\Rank(\CA(G;A):\ICA(G;A)) > \vert V \vert$.
\end{claim}
\begin{proof}
As $G$ is not Dedekind, so there is a subgroup $H \leq G$ which is not normal. Hence, $H = G_{x_i}$ for a non-constant configuration $x_i \in A^G$, and, by the proof of Lemma \ref{Dedekind}, $\ICA(G;A)$ is not transitive on $x_i G$. Consider the idempotent $\tau_{i,1} \in V$. Then $\tau_{i,1} =  (P \to x_1)$, where $P$ is the $\ICA(G;A)$-orbit inside $x_i G$ that contains $x_i$ and $x_1 \in A^G$ is a constant configuration. Let $Q$ be an $\ICA(G;A)$-orbit inside $x_i G$ such that $Q \neq P$. As there is no $\phi \in \ICA(G;A)$ mapping $P$ to $Q$, we have $(Q \to x_i) \notin \langle \ICA(G;A) \cup V \rangle$. Therefore, the edge $([H],[ G ])$ of the graph on $\mathcal{C}_G$ must be counted at least twice for its contribution towards the relative rank of $\ICA(G;A)$ on $\CA(G;A)$. The claim follows. 
\end{proof}
 
\end{proof}

Using Theorem \ref{th:relative rank}, we may find an upper bound for the smallest size of a generating set of $\CA(G;A)$, when $G$ is a finite Dedekind group.

\begin{corollary} \label{cor:bound}
Let $G$ be a finite Dedekind group and $A$ a finite set of size $q \geq 2$. Suppose that $\Rank(G) = m$ and that $G$ has $r$ different subgroups. Then,
\[ \Rank(\CA(G;A))  \leq  m (r -1) + \frac{1}{2} r (r + 5). \] 
\end{corollary}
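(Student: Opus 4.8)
The plan is to combine Lemma~\ref{le:preliminar} with the two structural results already established: the relative rank formula of Theorem~\ref{th:relative rank} and the decomposition of the group of units in Corollary~\ref{cor:structure}. Since $\ICA(G;A)$ is precisely the group of units of $\CA(G;A)$, Lemma~\ref{le:preliminar} gives
\[ \Rank(\CA(G;A)) = \Rank(\CA(G;A):\ICA(G;A)) + \Rank(\ICA(G;A)), \]
so it suffices to bound each summand and add the two estimates.

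For the first summand, I would invoke Theorem~\ref{th:relative rank}: as $G$ is Dedekind, the relative rank equals $\vert \mathcal{E}_G\vert$ (or $\vert\mathcal{E}_G\vert - \vert I_2(G)\vert$ when $q=2$), so in all cases it is at most $\vert\mathcal{E}_G\vert$. Because every subgroup of a Dedekind group is normal, each conjugacy class is a singleton, so $\mathcal{C}_G$ is just the set of $r$ subgroups of $G$ ordered by inclusion. The edge set $\mathcal{E}_G$ consists of all ordered pairs $(H_i,H_j)$ with $H_i\leq H_j$; the $r$ reflexive pairs contribute $r$, and each of the $\binom{r}{2}$ unordered pairs of distinct subgroups contributes at most one comparable ordered pair. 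Hence
\[ \Rank(\CA(G;A):\ICA(G;A)) \leq \vert\mathcal{E}_G\vert \leq r + \binom{r}{2} = \frac{r(r+1)}{2}. \]

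For the second summand, Corollary~\ref{cor:structure} gives $\ICA(G;A)\cong \prod_{i=1}^r \big((G/H_i)\wr \Sym_{\alpha_i}\big)$, and rank is subadditive over direct products, so $\Rank(\ICA(G;A)) \leq \sum_{i=1}^r \Rank\big((G/H_i)\wr \Sym_{\alpha_i}\big)$. The key auxiliary fact is the elementary bound $\Rank(C\wr \Sym_\alpha)\leq \Rank(C)+2$, valid for any finite group $C$ and any $\alpha\geq 1$: placing a generating set of $C$ in a single coordinate and adjoining at most two generators of $\Sym_\alpha$ generates the whole wreath product, since conjugation by $\Sym_\alpha$ distributes the $C$-generators across all coordinates of the base group. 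Since each $H_i$ is normal, $G/H_i$ is an $m$-generated quotient of $G$, whence $\Rank(G/H_i)\leq m$ and each factor has rank at most $m+2$. The crucial saving comes from the factor corresponding to the subgroup $G$ itself (one of the $r$ subgroups): here $G/G$ is trivial, so this factor equals $\Sym_{\alpha_{[G]}}=\Sym_q$ and has rank at most $2$ rather than $m+2$. Therefore
\[ \Rank(\ICA(G;A)) \leq 2 + (r-1)(m+2) = (r-1)m + 2r. \]

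Adding the two bounds yields $\Rank(\CA(G;A)) \leq \frac{r(r+1)}{2} + (r-1)m + 2r = m(r-1) + \frac{r(r+5)}{2}$, as claimed. No single step is genuinely hard; the only points requiring care are the wreath-product rank estimate $\Rank(C\wr\Sym_\alpha)\leq \Rank(C)+2$ and the bookkeeping that isolates the trivial factor $G/G$, which is exactly what sharpens the naive bound $mr$ into $m(r-1)$.
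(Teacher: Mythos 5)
Your proof is correct and follows essentially the same route as the paper's: both split $\Rank(\CA(G;A))$ via Lemma~\ref{le:preliminar}, bound the relative rank by $\vert\mathcal{E}_G\vert \leq r + \binom{r}{2}$ using Theorem~\ref{th:relative rank}, and bound $\Rank(\ICA(G;A))$ via Corollary~\ref{cor:structure} together with the wreath-product estimate $\Rank((G/H_i)\wr\Sym_{\alpha_i}) \leq m+2$, isolating the factor for $H_1=G$, which is $\Sym_q$ of rank at most $2$. The paper even exhibits the same explicit generating set for $(G/H)\wr\Sym_\alpha$ (generators of $G/H$ in one coordinate plus a transposition and a long cycle) that your conjugation argument describes.
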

\begin{proof}
Observe that for any $\alpha \geq 3$ and any $H \leq G$, we have $\Rank((G/H)\wr \Sym_{\alpha}) \leq m + 2$ because
\[ \{ ((g_1, e, \dots, e); \id), \dots, ((g_m,e, \dots, e); \id) , ((e,\dots,e); (1,2)), ((e,\dots,e);(1,2,\dots, \alpha))  \} \]
is a generating set of $(G/H)\wr \Sym_{\alpha}$, whenever $\{ g_1, \dots, g_m \}$ is a generating set for $G$.

Let $H_1, H_2, \dots, H_r$ be the list of different subgroups of $G$ with $H_1 = G$.  For each $1 \leq i \leq r$, let $\alpha_i := \alpha_{[H_i]}(G;A)$.
Thus, by Lemma \ref{le:preliminar}, Corollary \ref{cor:structure}, and Theorem \ref{th:relative rank} we have:
\begin{align*}
\Rank(\CA(G;A)) &= \Rank(\ICA(G;A)) +  \Rank(\CA(G;A):\ICA(G;A)) \\
& \leq  \sum_{i=1}^r \Rank((G/H_i)\wr \Sym_{\alpha_i}) + \vert \mathcal{E}_G \vert \\ 
& \leq \Rank(\Sym_q) + \sum_{i=2}^r (m + 2) + \binom{r}{2} + r \\
& \leq 2 + (r-1)(m+2) + \frac{1}{2}r(r-1) + r \\
& = m (r -1) + \frac{1}{2} r (r + 5).
\end{align*}

\end{proof}

\begin{figure}[h]
\centering
\begin{tikzpicture}[vertex/.style={circle, draw, fill=none, inner sep=0.48cm}]
    \vertex{1}{1}{2}    \node at (1,3.4) {$H_5 \cong \mathbb{Z}_1$};  
    \vertex{2}{3}{1}    \node at (3,1.7) {$H_4 \cong \mathbb{Z}_2$};   
   \vertex{3}{1}{1}    \node at (1,1.7) {$H_3 \cong \mathbb{Z}_2$};  
   \vertex{4}{-1}{1}    \node at (-1,1.7) {$H_2 \cong \mathbb{Z}_2$};  
   \vertex{5}{1}{0}   \node at (1,0) {$H_1 = G$};   

  \arc{1}{2}
  \arc{1}{3}
 \arc{1}{4}
\arc{2}{5}
\arc{3}{5}
\arc{4}{5}
   \end{tikzpicture} 
\caption{Hasse diagram of subgroup lattice of $G = \mathbb{Z}_2 \times \mathbb{Z}_2$.}
\label{Fig1}
   \end{figure}
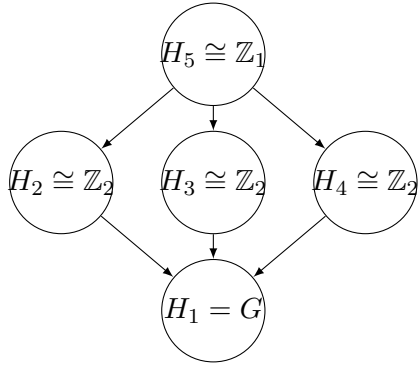  

The bound of Corollary \ref{cor:bound} may become tighter if we actually know $\vert \mathcal{E}_G \vert $ and $\Rank(G/H_i)$, for all $H_i \leq G$.

\begin{example}
Let $G=\mathbb{Z}_2 \times \mathbb{Z}_2$ be the Klein-four group and $A = \{ 0,1 \}$. With the notation of Example \ref{ex:klein}, Figure \ref{Fig1} illustrates the Hasse diagram of the subgroup lattice of $G$ (i.e. the actual lattice of subgroups is the transitive and reflexive closure of this graph). Hence, by Theorem \ref{th:relative rank} and Example \ref{ex:ICA-klein},
\begin{align*}
& \Rank(\CA(G;A):\ICA(G;A))  =  \vert \mathcal{E}_G \vert  - 3 = 12 - 3 = 9,\\
& \Rank(\CA(G;A))   \leq 9 + 9 = 18, \text{ as } \Rank(\ICA(G;A)) \leq 9.  
\end{align*}
\end{example}


\section{Conclusions}

In this paper we studied the monoid $\CA(G;A)$ of all cellular automata over a finite group $G$ and a finite set $A$. Our main results are the following:
\begin{enumerate}
\item We completely determined the structure of the group of invertible cellular automata $\ICA(G;A)$ in terms of the structure of $G$ (Theorem \ref{th:ICA}).

\item We improved the known lower bound on the number of aperiodic configurations of $A^G$ (Theorem \ref{low-bound}).

\item We showed that any generating set of $\CA(G;A)$ must have at least one cellular automaton whose minimal memory set is $G$ itself (Theorem \ref{minimal-memory}).

\item We gave a lower bound for the minimal size of a set $V \subseteq \CA(G;A)$ such that $\ICA(G;A) \cup V$ generates $\CA(G;A)$, and we showed that this lower bound is achieved if and only if all subgroups of $G$ are normal (Theorem \ref{th:relative rank}).
\end{enumerate}

Most of our results are particularly good for finite Dedekind groups, i.e. finite groups in which all subgroups are normal (this includes all finite abelian groups). 

Some open problems and directions for future work are the following:

\begin{enumerate}
\item Examine further the case when $G$ is not a Dedekind group; in particular, determine the relative rank of $\ICA(G;A)$ on $\CA(G;A)$.

\item Improve the upper bound on $\Rank(\CA(G;A))$ given by Corollary \ref{cor:bound}.

\item Study generating sets of $\CA(G;A)$ when $G$ is an infinite group.

\item Study other algebraic properties of the monoid $\CA(G;A)$.
\end{enumerate}

\section{Acknowledgments}
This work was supported by the EPSRC grant EP/K033956/1. We kindly thank Turlough Neary and Matthew Cook for their invitation to submit this paper and for the organisation of the conference AUTOMATA 2016. We also thank the referees of this paper for their insightful suggestions.

\Addresses

\end{document}